\newtheorem{theorem}{Theorem}
\newtheorem{corollary}[theorem]{Corollary}
\newtheorem{theoremn}{Theorem}[section] 
\newtheorem{lemman}[theoremn]{Lemma}
\newtheorem{propositionn}[theoremn]{Proposition}
\newtheorem{conjecturen}[theoremn]{Conjecture}
\newtheorem{questionn}[theoremn]{Question}
\newtheorem*{theorem*}{Theorem}
\newtheorem*{lemma*}{Lemma}
\theoremstyle{definition}
\newtheorem{remark}[theoremn]{Remark}
\newtheoremstyle{definition*}
{\topsep}
{\topsep}
{}
{0pt}
{\bfseries}
{.}
{ }
{\thmname{#1}\thmnumber{ #2}\thmnote{ (#3)}}
\theoremstyle{definition*}
\newtheorem*{definition*}{Definition}
\newtheorem*{remark*}{Remark}
\newtheorem*{claim*}{Claim}
\begin{document}
	
	\baselineskip 13.2pt
	
	\title[ ]{on the influence of the fixed points of an automorphism to the structure of a group }

	\author{{M.Yas\.{I}r} K{\i}zmaz }
	\address{Department of Mathematics, Bilkent University, 06800 
		Bilkent, Ankara, Turkey}
	
	\email{yasirkizmaz@bilkent.edu.tr}
	\subjclass[2010]{20D10,20D20,20D45}
	\keywords{$p$-closed, $p$-nilpotency, coprime action}
	\maketitle
	
	\begin{abstract}
			Let $\alpha$ be a	coprime automorphism  of  a group $G$ of prime order and let $P$ be an $\alpha$-invariant Sylow $p$-subgroup of $G$. Assume that $p\notin \pi(C_G(\alpha))$. Firstly, we prove that  $G$ is $p$-nilpotent if and only if  $C_{N_G(P)}(\alpha)$ centralizes $P$. In the case that $G$ is  $Sz(2^r)$ and $PSL(2,2^r)$-free where $r=|\alpha|$, we show that $G$ is $p$-closed if and only if $C_G(\alpha)$ normalizes $P$. As a consequences of these two results, we obtain that $G\cong P\times H$ for a group $H$ if and only if  $C_G(\alpha)$ centralizes $P$. We also prove a generalization of the Frobenius $p$-nilpotency theorem for groups admitting a group of automorphisms of coprime order.
	\end{abstract}

	\section{Introduction}
	All groups considered in this paper are finite. Notation and terminology are standard as in \cite{1}. Let $G$ be a group and $\alpha$ be an automorphism of $G$.   One of natural research topics is to investigate the structure of $G$ under some assumptions on $\alpha$ and the structure of fixed points of $\alpha$ on $G$, denoted by $C_G(\alpha)$. On this regard, Thompson proved the following in his Ph.D thesis:
	\begin{theorem*}\cite[Theorem 1]{12}
	If the order of $\alpha$ is  prime and $C_G(\alpha)=1$, then $G$ is nilpotent.
	\end{theorem*}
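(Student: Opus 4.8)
The plan is to recover Thompson's theorem as the assertion that the kernel of a Frobenius group is nilpotent, and to attack it by a minimal counterexample combined with coprime action. \textbf{First} I would pass to the semidirect product $\Gamma = G \rtimes \langle \alpha \rangle$. Since $C_G(\alpha) = 1$ and $|\alpha|$ is prime, no nonidentity element of $G$ is centralized by any conjugate of $\alpha$, so $\Gamma$ is a Frobenius group with kernel $G$ and complement $\langle \alpha \rangle$. In particular $\gcd(|G|, |\alpha|) = 1$, so $\alpha$ acts coprimely and I may freely use the standard coprime-action toolkit: $\alpha$ normalizes a Sylow $p$-subgroup for each prime $p$, and for every $\alpha$-invariant normal $N \trianglelefteq G$ one has $C_{G/N}(\alpha) = C_G(\alpha)N/N = 1$.

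\textbf{Next} I would take $G$ to be a counterexample of least order. If $H < G$ is a proper $\alpha$-invariant subgroup then $C_H(\alpha) \le C_G(\alpha) = 1$; since $|\alpha|$ is prime, $\alpha|_H$ is either trivial (forcing $H \le C_G(\alpha) = 1$) or fixed-point-free of prime order, so minimality gives that $H$ is nilpotent. The same argument applied to quotients shows every proper $\alpha$-invariant quotient $G/N$ is nilpotent. Thus $G$ is a group, not itself nilpotent, all of whose proper $\alpha$-invariant sections are nilpotent. The case $|\alpha| = 2$ is then immediate: a fixed-point-free involutory automorphism inverts every element (the map $x \mapsto x^{-1}x^\alpha$ is injective, hence onto, so $x^\alpha = x^{-1}$ throughout), whence $G$ is abelian, a contradiction; so I may assume $|\alpha|$ is odd.

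\textbf{The main engine} is to manufacture a normal $p$-complement and contradict non-nilpotency. Since $G$ is not nilpotent it fails to be $p$-nilpotent for some prime $p$; suppose first this can be arranged at an odd prime $p$, and let $P$ be an $\alpha$-invariant Sylow $p$-subgroup. The subgroups $C_G(Z(P))$ and $N_G(J(P))$ (with $J(P)$ the Thompson subgroup) are $\alpha$-invariant, so whenever they are proper they are nilpotent by the previous step and hence carry normal $p$-complements; Thompson's normal $p$-complement theorem (in its sharpened Glauberman form) then yields a normal $p$-complement $N = O_{p'}(G)$ in $G$. This $N$ is characteristic, hence $\alpha$-invariant, and $1 \ne N < G$, so $N$ is nilpotent while $G/N$ is a $p$-group; in particular $G$ is solvable. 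From here I would exploit the self-centralizing Fitting subgroup, $C_G(F(G)) \le F(G)$, together with the minimal-non-nilpotent structure of the $\alpha$-invariant sections to force every Sylow subgroup normal, i.e. $G$ nilpotent — the desired contradiction.

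\textbf{The hard part} is precisely the prime $2$, that is, the case in which $2$ is the only prime at which $G$ fails to be $p$-nilpotent. Thompson's normal $p$-complement criterion, and the underlying $p$-stability of the relevant $p$-local configurations, genuinely require $p$ odd (the standard obstruction being sections isomorphic to $SL(2,3)$), so the clean argument above breaks down and one must analyze the $2$-local structure by hand. Controlling this even case — ruling out the coexistence of a fixed-point-free automorphism of odd prime order with a non-$2$-nilpotent $2$-local configuration — is the delicate heart of Thompson's thesis, and I expect it to be by far the most laborious step.
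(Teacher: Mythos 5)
First, note that the paper does not prove this statement at all: it is quoted verbatim as Thompson's theorem (\cite[Theorem 1]{12}) and used as a black box, so there is no in-paper proof to compare against. Your proposal therefore has to stand on its own, and as written it is an outline rather than a proof: your final paragraph explicitly defers the decisive case ("I expect it to be by far the most laborious step"), so the argument does not close. The preliminary reductions are fine — the Frobenius/coprimality observation, the minimal counterexample, the inversion argument for $|\alpha|=2$, and the use of the Thompson normal $p$-complement theorem at odd primes (modulo the unaddressed possibility that $C_G(Z(P))$ or $N_G(J(P))$ equals $G$, in which case $O_p(G)\neq 1$ and you must pass to $G/O_p(G)$, using $C_{G/O_p(G)}(\alpha)=1$) — but two genuine gaps remain.

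The first gap is that your "hard part" is mislocated. If the minimal counterexample $G$ is $q$-nilpotent for every odd prime $q$, then the intersection of the normal $q$-complements over odd $q\in\pi(G)$ is a normal Sylow $2$-subgroup with nilpotent quotient of odd order; so in every branch of your case division $G$ comes out solvable, and no delicate $2$-local analysis is needed. The second, and real, gap is the solvable endgame. Your sentence "exploit $C_G(F(G))\le F(G)$ ... to force every Sylow subgroup normal" is a gesture, not an argument: the standard proof of the solvable case reduces the minimal counterexample to $G=QP$ with $Q=F(G)$ a minimal normal elementary abelian $q$-group on which $\langle\alpha\rangle P$ acts faithfully, and then derives the contradiction $C_Q(\alpha)\neq 1$ from exactly the representation-theoretic fact the paper records as Lemma 2.2 (\cite[Theorem 3.4.4]{6}). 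Without that input — or some equivalent — nothing in your sketch rules out the configuration where $\alpha$ is fixed-point-free on $Q$ and on $P$ separately but $QP$ is not nilpotent, so the proof is incomplete at precisely the point where the theorem's content lies.
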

	
	If $(|G|,|\alpha|)=1$ then $\alpha$ is called \textbf{a coprime automorphism}.
	Note that in Thompson's theorem, the assumption $C_G(\alpha)=1$ leads that $\alpha$ is necessarily a coprime automorphism since otherwise one can easily observe that $C_G(\alpha)\neq 1$. The above theorem simply says that the existence of a fixed point free automorphism of prime order forces all Sylow subgroups of $G$ to be normal in $G$. Thus, we can intuitively expect that if we have a coprime automorphism $\alpha$ of prime order and $C_G(\alpha)$ is not too big, then some of Sylow subgroups or Hall subgroups of $G$ are normal. This is our motivation to expand Thompson's theorem.
	
	Before stating our main results, we shall mention some preliminary results used repeatedly in this paper and  note some of our conventions: Let $p$ be  a prime number. We say that $G$ is \textbf{$p$-closed} if $G$ has a normal Sylow $p$-subgroup. The group $G$ is said to be \textbf{$p$-nilpotent} if it has a normal Hall $p'$-subgroup. The set of all primes dividing the order of $G$ is denoted by $\pi(G)$. Let $A$ be a group acting on $G$ by automorphisms such that $(|A|,|G|)=1$ and $p\in \pi(G)$. It is well known that there exists an $A$-invariant Sylow $p$-subgroup of $G$ and any two $A$-invariant Sylow $p$-subgroups of $G$ are conjugate by an element of $C_G(A)$  (see \cite[Theorem 3.23 (a) and (b)]{1}). Moreover, each $A$-invariant $p$-subgroup of $G$ is contained in some $A$-invariant Sylow $p$-subgroup of $G$ (see \cite[Corollary 3.25]{1}). For an $A$-invariant normal subgroup $N$ of $G$, write $\overline G=G/N$, the equality $\overline{C_G(A)}=C_{\overline G}(A)$ holds (see \cite[ Corollary 3.28]{1}).

 	\begin{theorem}\label{Second main theorem}
 		Let $\alpha$ be a	coprime automorphism  of $G$ of prime order and let $P$ be an $\alpha$-invariant Sylow $p$-subgroup of $G$. Assume that $p\notin \pi(C_G(\alpha))$. Then $G$ is $p$-nilpotent if and only if $C_{N_G(P)}(\alpha)$ centralizes $P$.
 	\end{theorem}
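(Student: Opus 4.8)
The plan is to treat the two implications separately: the forward direction is routine, while the converse carries essentially all the weight.

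For the forward direction, suppose $G$ is $p$-nilpotent and let $K=O_{p'}(G)$ be its normal Hall $p'$-subgroup, which is characteristic and hence $\alpha$-invariant. I would first record the standard description $N_G(P)=C_K(P)\times P$: writing $N_G(P)=N_K(P)P$ by the modular law, any $x\in N_K(P)$ satisfies $[P,x]\le P\cap K=1$, so $N_K(P)=C_K(P)$. Since $\alpha$ stabilizes both $K$ and $P$, it stabilizes $C_K(P)$, and therefore $C_{N_G(P)}(\alpha)=C_{C_K(P)}(\alpha)\times C_P(\alpha)$. The hypothesis $p\notin\pi(C_G(\alpha))$ forces the $p$-group $C_P(\alpha)$ to be trivial, while $C_{C_K(P)}(\alpha)\le C_K(P)\le C_G(P)$; hence $C_{N_G(P)}(\alpha)$ centralizes $P$, as required.

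For the converse I would argue by induction on $|G|$, taking a minimal counterexample in which $C_{N_G(P)}(\alpha)$ centralizes $P$ but $G$ is not $p$-nilpotent. The first reduction is $O_{p'}(G)=1$: passing to $\overline G=G/O_{p'}(G)$, the image $\overline P$ is an $\alpha$-invariant Sylow $p$-subgroup, the equality $C_{\overline G}(\alpha)=\overline{C_G(\alpha)}$ keeps $p$ out of $\pi(C_{\overline G}(\alpha))$, and the normalizer hypothesis descends, so minimality would make $\overline G$ and hence $G$ $p$-nilpotent. A second observation, used repeatedly, is that every proper $\alpha$-invariant subgroup $H$ with $P\le H$ inherits the hypothesis (since $C_{N_H(P)}(\alpha)\le C_{N_G(P)}(\alpha)$ and $C_H(\alpha)\le C_G(\alpha)$) and is therefore $p$-nilpotent by minimality. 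The key structural input is that $\alpha$ acts fixed-point-freely on $\overline N:=N_G(P)/C_G(P)$: as $C_{N_G(P)}(\alpha)\le C_G(P)$ and $C_G(P)$ is $\alpha$-invariant, coprime action gives $C_{\overline N}(\alpha)=\overline{C_{N_G(P)}(\alpha)}=1$, so by Thompson's theorem $\overline N$ is nilpotent. I would then combine this nilpotency with $C_P(\alpha)=1$ and $p\notin\pi(C_G(\alpha))$ to show that the Hall $p'$-part of $\overline N$ is trivial, i.e.\ that $N_G(P)$ is $p$-nilpotent and equals $P\times O_{p'}(C_G(P))$; here the point is that the relevant $\alpha$-invariant $p'$-subgroup $T^*$ of $N_G(P)$ has $C_{T^*}(\alpha)$ centralizing $P$, so that fixed-point-freeness of $\alpha$ on $P$ forces $T^*$ into $C_G(P)$.

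It then remains to upgrade ``$N_G(P)$ is $p$-nilpotent'' to ``$G$ is $p$-nilpotent'', and this is the main obstacle, since a $p$-nilpotent Sylow normalizer by itself does not control $p$-fusion at every $p$-local subgroup. Here I would invoke the Frobenius normal $p$-complement criterion together with the structure of a minimal non-$p$-nilpotent subgroup $S\le G$, which is $p$-closed with a normal Sylow $p$-subgroup $P_0$ that may be conjugated into $P$, and feed back the two facts established above --- that $\alpha$ acts coprimely and fixed-point-freely on $P$, and that every proper $\alpha$-invariant overgroup of $P$ is already $p$-nilpotent --- to force the offending fusion to be realized inside $N_G(P)$. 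The genuine difficulty, and where I expect essentially all the work to concentrate, is that the ``bad'' $p$-subgroup supplied by Frobenius need not be $\alpha$-invariant; reconciling this with the coprime action (for instance, by replacing it with an $\alpha$-invariant $p$-subgroup up to conjugacy, or by proving an $\alpha$-equivariant version of the Frobenius criterion) is the crux of the argument.
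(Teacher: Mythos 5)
Your forward direction and your first two reductions are sound and essentially coincide with the paper's: the paper likewise shows (its Step 1) that $C_{\overline N}(\alpha)=1$ for $\overline N=N_G(P)/C_G(P)$, applies Thompson's theorem to get nilpotency of $\overline N$, and then pushes a Hall $p'$-subgroup $H$ of $N_G(P)$ into $C_G(P)$ via $[H,P]\leq Z(P)$, $[H,P,P]=1$ and coprimeness; and (its Step 2) it kills $O_{p'}(G)$ exactly as you do. The problem is that your argument stops at the point you yourself flag as the crux. The missing ingredient is precisely an $\alpha$-equivariant Frobenius criterion, which is the paper's Theorem~\ref{gen. Frob}: if $N_G(U)$ is $p$-nilpotent for every nontrivial $A$-invariant $p$-subgroup $U$, then $G$ is $p$-nilpotent. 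This is not a routine equivariant rewording of the classical Frobenius theorem; its proof in the paper reduces a minimal counterexample to a characteristically simple, then simple, group, invokes the classification to land in simple groups of Lie type with solvable centralizer of $A$, and eliminates $PSU(3,2^r)$, $Sz(2^r)$ and $PSL(2,2^r)$ one by one using T.I.\ Sylow $2$-subgroups, Gr\"un's theorem, and Burnside. Nothing in your sketch --- minimal non-$p$-nilpotent subgroups, conjugating the offending subgroup into $P$ --- supplies this, and a $p$-nilpotent Sylow normalizer does not by itself control fusion in the remaining $p$-local subgroups.

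Moreover, even granting Theorem~\ref{gen. Frob}, the proof is not finished: the theorem only yields a nontrivial $\alpha$-invariant $p$-subgroup $U$ with $N_G(U)$ not $p$-nilpotent, and the paper takes a maximal such $U$, shows $N_G(U)=G$ by minimality (whence $O_p(G)\neq 1$ and $G/O_p(G)$ is $p$-nilpotent), proves that $P$ is a maximal $\alpha$-invariant subgroup of $G$ using Hall--Higman ($C_G(O_p(G))\leq O_p(G)$), and reaches the final contradiction via the representation-theoretic Lemma~\ref{repthm}, which forces $C_V(\alpha)\neq 0$ on $V=O_p(G)/\Phi(O_p(G))$ against $C_P(\alpha)=1$. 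None of this endgame appears in your proposal, so the argument as written has a genuine gap at its central step.
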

 	It is easy to observe that $C_{N_G(P)}(\alpha)$ centralizes $P$ if $N_G(P)$ is $p$-nilpotent. The converse of this assertion is also true, which is not so obvious (see Step (1) of the proof of Theorem \ref{Second main theorem}). Thus, we may say that $G$ is $p$-nilpotent if and only if $N_G(P)$ is $p$-nilpotent under the hypothesis of Theorem \ref{Second main theorem}. If we restrict ourself to odd primes, Theorem \ref{Second main theorem} can be obtained by \cite[Theorem A]{7}. Therefore, the main contribution of Theorem \ref{Second main theorem} is for $p=2$. However, we shall not appeal to \cite[Theorem A]{7} in this paper. One of the  main ingredient of the proof of Theorem \ref{Second main theorem} is Theorem \ref{gen. Frob}, which is of independent interest
 	too, as it is a generalization of the Frobenius $p$-nilpotency theorem for groups admitting a group of automorphisms of coprime order. 
 	
 \begin{theorem}\label{gen. Frob}
 	Let $G$ be a group and $A$ be a group acting on $G$ by automorphisms such that $(|A|,|G|)=1$. Assume that for each nontrivial $A$-invariant  $p$-subgroup $U$ of $G$, the group $N_G(U)$ is $p$-nilpotent. Then $G$ is $p$-nilpotent.
 \end{theorem}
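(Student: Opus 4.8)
The plan is to argue by induction on $|G|$, taking $G$ to be a counterexample of least order, and to exploit $A$-invariance to perform the standard Frobenius-type reductions. Fix an $A$-invariant Sylow $p$-subgroup $P$ (it exists by the coprime action facts recalled in the introduction); we may assume $p\in\pi(G)$, so $P\neq 1$. Taking $U=P$ in the hypothesis shows $N_G(P)$ is $p$-nilpotent, so if $P\trianglelefteq G$ then $G=N_G(P)$ is $p$-nilpotent and we are done; hence $P$ is not normal. Since $O_{p'}(G)$ and $O_p(G)$ are characteristic, they are $A$-invariant: for $O_p(G)\neq 1$ we would get that $N_G(O_p(G))=G$ is $p$-nilpotent, contradicting that $G$ is a counterexample, so $O_p(G)=1$; and a coprime Frattini argument (using that any $A$-invariant $p$-subgroup of a preimage lies in an $A$-invariant Sylow $p$-subgroup) shows the hypothesis is inherited by $\overline G=G/O_{p'}(G)$, so minimality forces $O_{p'}(G)=1$. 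Finally, every proper $A$-invariant subgroup $H<G$ again satisfies the hypothesis, because $N_H(U)=N_G(U)\cap H$ is a subgroup of the $p$-nilpotent group $N_G(U)$ for each nontrivial $A$-invariant $p$-subgroup $U\le H$; by induction every proper $A$-invariant subgroup of $G$ is $p$-nilpotent. Thus $F(G)=O_p(G)\,O_{p'}(G)=1$.

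For $p$ odd this already yields a contradiction with no further work: the subgroups $\Omega_1(Z(P))$ and $J(P)$ are characteristic in $P$, hence $A$-invariant, so by hypothesis $N_G(J(P))$ and $C_G(\Omega_1(Z(P)))\le N_G(\Omega_1(Z(P)))$ are $p$-nilpotent, and the Glauberman--Thompson normal $p$-complement theorem gives that $G$ is $p$-nilpotent. The entire difficulty, therefore, is the case $p=2$, where no such factorization theorem is available; this matches the remark in the introduction that the new content of Theorem~\ref{Second main theorem} lies at $p=2$.

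For $p=2$ the plan is to analyze the generalized Fitting subgroup. Since $F(G)=1$ we have $F^*(G)=E(G)\neq 1$, and $E(G)$ is perfect and $A$-invariant; if $E(G)<G$ then $E(G)$ is $p$-nilpotent by the previous step, and a perfect $p$-nilpotent group is a $p'$-group, so $E(G)\le O_{p'}(G)=1$, a contradiction. Hence $G=F^*(G)$ has trivial centre and is a direct product $G=S_1\times\cdots\times S_k$ of nonabelian simple groups permuted by $A$; the product of the components in any single $A$-orbit is $A$-invariant and normal, and the same perfect-plus-$p$-nilpotent argument forces that orbit to be all of $G$, so $A$ permutes the $S_i$ transitively, and $p\mid|S_i|$ for each $i$. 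Let $B=N_A(S_1)$ act coprimely on $S_1$ and choose a transversal $a_1=1,a_2,\dots,a_k$ with $S_1^{a_i}=S_i$. If one produces a nontrivial $B$-invariant $p$-subgroup $Q\le S_1$ with $N_{S_1}(Q)$ not $p$-nilpotent, then $U=Q^{a_1}\times\cdots\times Q^{a_k}$ is an $A$-invariant $p$-subgroup of $G$ whose normalizer $N_G(U)=\prod_i N_{S_i}(Q^{a_i})$ has a non-$p$-nilpotent direct factor and hence is not $p$-nilpotent, contradicting the hypothesis. This reduces Theorem~\ref{gen. Frob} at $p=2$ to the case of a simple group equipped with a coprime group of automorphisms.

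The main obstacle is exactly this simple-group step for $p=2$: given a nonabelian simple group $S$ (necessarily of even order by Feit--Thompson) with a coprime automorphism group $B$ and a $B$-invariant Sylow $2$-subgroup $P_1$, one must exhibit a nontrivial $B$-invariant $2$-subgroup whose normalizer in $S$ is not $2$-nilpotent. The obvious candidates $P_1$, $Z(P_1)$, $J(P_1)$ are $B$-invariant, and if any of their normalizers fails to be $2$-nilpotent we are finished; the delicate point is that, unlike the odd case, there is no Thompson factorization forcing one of these to be "bad," and a naive attempt to symmetrize a known bad $2$-subgroup over the $B$-action founders because $B$ need not fix a $G$-conjugacy class of such subgroups (Glauberman's lemma applies only to a single transitive class). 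I expect this to require invoking the structure/classification of the finite simple groups to verify, uniformly and without exceptions, that every nonabelian simple group carries such a $B$-invariant $2$-local subgroup; this is the same machinery the paper develops for its $p$-closed theorem, and it is the only place where genuinely new arguments beyond the Frobenius--Thompson circle of ideas are needed.
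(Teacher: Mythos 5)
Your reductions are correct and track the paper's own proof almost step for step: minimal counterexample, disposal of odd $p$ via Thompson--Glauberman (the paper uses $N_G(J(P))$ and $N_G(Z(P))$ with Thompson's normal $p$-complement theorem), $O_p(G)=O_{p'}(G)=1$, inheritance of the hypothesis by proper $A$-invariant subgroups, reduction to a direct product of simple groups transitively permuted by $A$, and the transversal construction $U^{t_1}\times\cdots\times U^{t_n}$ reducing to the pair $(B,G_1)$ with $B=\mathrm{Stab}_A(G_1)$. (Your route through $F^*(G)=E(G)$ and ``perfect $+$ $p$-nilpotent $\Rightarrow$ $p'$-group'' is a slight repackaging of the paper's more elementary observation that a proper $A$-invariant normal subgroup, being $2$-nilpotent with characteristic Hall $2'$-part, must be a $2$-group and hence trivial; both are fine.) However, the proof is not complete: you stop at the simple-group case for $p=2$ and only state that you \emph{expect} CFSG to settle it. That case is precisely the mathematical content of the theorem, so as written there is a genuine gap.

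For the record, here is what the paper does there, and none of it is routine. First, since $A$ acts coprimely and nontrivially on the simple group $G$, the Atlas together with \cite[Theorem 4.9.1]{3} forces $G\cong{}^d\Sigma(q^r)$ of Lie type with $C_G(A)\cong{}^d\Sigma(q)$, $r=|A|$. The key trick you are missing is that every $2$-subgroup of $C_G(A)$ is automatically $A$-invariant, so the hypothesis plus the \emph{ordinary} Frobenius theorem forces $C_G(A)$ to be $2$-nilpotent, hence solvable; running through the Lie-type list, solvability of ${}^d\Sigma(q)$ leaves only $C_G(A)\in\{PSU(3,2),\,PSL(2,2),\,Sz(2),\,PSL(2,3)\}$, and $PSL(2,3)\cong A_4$ is excluded because it is not $2$-nilpotent. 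This pins $G$ down to $PSU(3,2^r)$, $PSL(2,2^r)$, or $Sz(2^r)$. Each is then eliminated by exhibiting a canonical (hence $A$-invariant) bad $2$-local: $N_G(P)$ is not $2$-nilpotent for $PSU(3,2^r)$ because the Sylow $2$-subgroup is T.I.\ and its normalizer controls fusion; $N_G(\Omega_1(P))$ is not $2$-nilpotent for $Sz(2^r)$ because $\Omega_1(P)=Z(P)$ is weakly closed and Gr\"un's theorem applies; and $PSL(2,2^r)$ has abelian Sylow $2$-subgroups, so Burnside applies to $N_G(P)$. Your worry about symmetrizing a ``bad'' $2$-subgroup over the $B$-action is thus resolved not by Glauberman's lemma but by showing the surviving groups each possess a characteristic-in-$P$ witness; supplying this analysis (or an equivalent one) is what your proposal still owes.
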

	In the case that $A=1$, we have the usual Frobenius $p$-nilpotency theorem (see \cite[Theorem 5.26]{1}).
 	 We also note that Theorem \ref{gen. Frob} shows that Thompson's main result in his Ph.D thesis \cite[Theroem A]{12} can be extended to all primes under suitable changes in the hypothesis. The proof of Theorem \ref {gen. Frob} depends on CFSG, so one might consider whether a  classification free proof is possible. 
 	
 	\begin{theorem}\label{main thm}
 	Let $\alpha$ be a	coprime automorphism  of $G$ of prime order $r$ and let $P$ be an $\alpha$-invariant Sylow $p$-subgroup of $G$ where   $p\in\pi(G)\setminus \pi(C_G(\alpha))$. Assume that $G$ is $PSL(2,2^r)$-free in the case that $p\mid 2^r+1$, and assume that $G$ is $Sz(2^r)$-free in the case that $p\mid 4^r+1$.  Then $G$ is $p$-closed if and only if $C_G(\alpha)$ normalizes $P$.
 	\end{theorem}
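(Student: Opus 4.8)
The plan is to prove the substantive ``if'' direction by a minimal counterexample argument; the ``only if'' direction is immediate, since if $P\trianglelefteq G$ then every subgroup of $G$, in particular $C_G(\alpha)$, normalizes $P$. So suppose for contradiction that $G$ is a counterexample of least order: $C_G(\alpha)$ normalizes $P$, the $PSL(2,2^r)$- and $Sz(2^r)$-freeness hypotheses hold, yet $P$ is not normal in $G$.

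First I would run the routine coprime reductions. Passing to $\overline G=G/O_{p'}(G)$, the subgroup $O_{p'}(G)$ is characteristic, hence $\alpha$-invariant; $\overline P$ is an $\alpha$-invariant Sylow $p$-subgroup, $C_{\overline G}(\alpha)=\overline{C_G(\alpha)}$ still normalizes $\overline P$ and still avoids $p$, and $X$-freeness passes to sections. So minimality forces $\overline G$ to be $p$-closed, whence $M:=P\,O_{p'}(G)\trianglelefteq G$. If $M<G$, then $M$ inherits all the hypotheses (using $C_M(\alpha)\le C_G(\alpha)\le N_G(P)$), so by minimality $P\trianglelefteq M$; but then $P$ is characteristic in $M\trianglelefteq G$, a contradiction. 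Thus either $O_{p'}(G)=1$, or $G=P\,O_{p'}(G)$ is $p$-nilpotent. The $p$-nilpotent case I would settle directly: writing $G=K\rtimes P$ with $K=O_{p'}(G)$, the identity $C_{G/K}(\alpha)=\overline{C_G(\alpha)}$ identifies $C_P(\alpha)$ with a homomorphic image of the $p'$-group $C_G(\alpha)$, so $C_P(\alpha)=1$; since $C_K(\alpha)$ normalizes $P$ and lies in the normal $p'$-subgroup $K$, we get $[C_K(\alpha),P]\le K\cap P=1$, so $C_K(\alpha)$ centralizes $P$, and a short coprime-action argument exploiting that $\alpha$ is fixed-point-free on $P$ then yields $[K,P]=1$, i.e.\ $P\trianglelefteq G$.

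With $O_{p'}(G)=1$ I would analyze $F^*(G)=O_p(G)\,E(G)$. The $p$-solvable (indeed $p$-constrained) configurations, where $E(G)=1$ and $C_G(O_p(G))\le O_p(G)$, should again succumb to elementary coprime-action and Hall-subgroup arguments: the fact that $C_G(\alpha)$ normalizes $P$ while avoiding $p$ means $\alpha$ acts ``fixed-point-freely in the $p$-direction,'' which severely restricts how $P$ can act on the $p'$-chief factors and forces $P$ down into $O_p(G)$. The genuinely hard case is $E(G)\ne1$. Here $\alpha$ permutes the components of $G$ in orbits of length $1$ or $r$, and I would push the hypotheses $p\notin\pi(C_G(\alpha))$ and $C_G(\alpha)\le N_G(P)$ down to a single $\alpha$-orbit of components, reducing the problem to an (almost simple) section built from a nonabelian simple group $S$ carrying an induced coprime automorphism of prime order $r$ for which $p\mid|S|$, $p\notin\pi(C_S(\alpha))$, the fixed-point subgroup $C_S(\alpha)$ normalizes an $\alpha$-invariant Sylow $p$-subgroup of $S$, and yet $S$ is not $p$-closed.

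The main obstacle is this final classification step, which I expect to rest on CFSG. A coprime automorphism of prime order of a nonabelian simple group is, up to inner automorphisms, a field automorphism of a group of Lie type over $\mathbb{F}_{q_0^{\,r}}$, with fixed-point subgroup the subfield group $C_S(\alpha)=S(q_0)$. I would run through the possibilities and use $p\notin\pi(C_S(\alpha))$ to force $p$ into the ``outer'' cyclotomic part of $|S|$, where the Sylow $p$-subgroup is cyclic and lies inside a maximal torus; comparing the requirement that $C_S(\alpha)$ normalize this cyclic subgroup against the relevant torus normalizers should single out exactly $S\cong PSL(2,2^r)$ with $p\mid 2^r+1$ and $S\cong Sz(2^r)$ with $p\mid 4^r+1$. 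The telling point is that in precisely these two families $C_S(\alpha)$, namely $PSL(2,2)\cong S_3$ and $Sz(2)$ of order $20$, does lie in the normalizer of the pertinent cyclic Sylow $p$-subgroup while $S$ is simple (and so not $p$-closed); these are forbidden by hypothesis, giving the contradiction. Verifying that no other simple group survives, and organizing the component and almost-simple bookkeeping so that the induced automorphism is genuinely of this field type with the inherited normalization property, is the delicate heart of the argument.
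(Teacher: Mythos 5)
Your skeleton (minimal counterexample, reduction to a simple group, CFSG identification of $PSL(2,2^r)$ and $Sz(2^r)$ as the only obstructions) matches the paper's, and you correctly spot where the two forbidden families enter. But there is a genuine gap at what is in fact the technical heart of the proof: the $p$-separable case. You reduce to $G=K\rtimes P$ with $K=O_{p'}(G)$, observe $C_P(\alpha)=1$ and $[C_K(\alpha),P]=1$, and then assert that ``a short coprime-action argument'' gives $[K,P]=1$. It does not, at least not at this level of generality: the natural move is to set $L=[K,P]$, note $L=[L,P]$, and hope that $C_L(\alpha)=1$ so that Thompson's theorem makes $LP$ nilpotent; but for nonabelian $L$ the condition that $C_L(\alpha)$ \emph{centralizes} $P$ does not force $C_L(\alpha)=1$ (that implication uses the decomposition $K=[K,P]\times C_K(P)$, which is only available once $K$ is abelian). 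The paper isolates exactly this configuration as Proposition 3.1 and needs a full minimal-counterexample reduction (to $O_{p'}(G)$ an elementary abelian $q$-group and $P$ a minimal $\alpha$-invariant elementary abelian group) followed by a nontrivial representation-theoretic input, Gorenstein's Theorem 3.4.4 (Lemma 2.2 of the paper), to show $C_K(\alpha)\neq 1$, after which one factors out $C_G(\alpha)$ and inducts. Your sketch of the $p$-constrained case ($O_{p'}(G)=1$, $E(G)=1$) as ``succumbing to elementary arguments'' papers over the same proposition.

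Two further points. First, your route through $F^*(G)$ and ``almost simple sections'' is heavier than necessary and leaves real bookkeeping undone (how $C_G(\alpha)\le N_G(P)$ and $p\notin\pi(C_G(\alpha))$ descend to a single component, and what the induced automorphism on that component is). The paper avoids all of this: if $N$ is \emph{any} nontrivial proper $\alpha$-invariant normal subgroup, then $N$ and $G/N$ are $p$-closed by minimality, so $G$ is $p$-separable and Proposition 3.1 finishes; hence the minimal counterexample is characteristically simple, and a product of $r>1$ transitively permuted components is killed instantly because $C_G(\alpha)$ is then a full diagonal copy of a component, so $\pi(C_G(\alpha))=\pi(G)$, contradicting $p\in\pi(G)\setminus\pi(C_G(\alpha))$. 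Second, the classification endgame is only gestured at; the paper's Lemma 3.3 carries it out by invoking the maximality of the subfield subgroup $C_G(\alpha)$ (Burgoyne--Griess--Lyons), which yields $G=PC_G(\alpha)$ and hence $P\lhd G$ in all but the families $PSL(2,2^r)$, $PSL(2,3^r)$, $Sz(2^r)$, and then uses explicit maximal-subgroup tables to eliminate $PSL(2,3^r)$ and to pin down the divisibility conditions on $p$ in the two surviving families. Without Proposition 3.1 (or a substitute) and this classification lemma, the proposal is an outline rather than a proof.
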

	 We would like to note that $C_G(\alpha)$ normalizes $P$ if and only if $G$ has a unique $\alpha$-invariant Sylow $p$-subgroup by \cite[Theorem 3.23 (b)]{1}. So, Theorem \ref{main thm} can be equivalently stated as follows: $G$ has a unique $\alpha$-invariant Sylow $p$-subgroup if and only if $G$ has a unique  Sylow $p$-subgroup under the hypothesis of the theorem.
	
		Lastly, we note that both Theorems A and C are generalizations of Thompson's theorem for different directions, namely one of them gives a necessary and sufficient condition for the existence of a normal Hall $p'$-subgroup of $G$ and  the other one does the same for the existence of a normal Sylow $p$-subgroup of $G$. The following  corollary is a consequence of these results.

	  \begin{corollary}\label{corol.}
	 Let $\alpha$ be a	coprime automorphism  of $G$ of prime order and let $P$ be an $\alpha$-invariant Sylow $p$-subgroup of $G$. Assume that $p\notin \pi(C_G(\alpha))$. Then $G\cong P\times H$ for a group $H$ if and only if  $C_G(\alpha)$ centralizes $P$.
	  \end{corollary}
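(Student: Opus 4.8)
The plan is to derive both implications from Theorems~\ref{Second main theorem} and~\ref{main thm}, after disposing of the degenerate case $p\notin\pi(G)$: there $P=1$, so $C_G(\alpha)$ centralizes $P$ vacuously and $G=1\times G$, and both sides of the equivalence hold. So I would assume throughout that $p\in\pi(G)$.

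For the forward implication I would start from an internal decomposition $G=P\times H$, where $P$ is the given $\alpha$-invariant Sylow $p$-subgroup and $H$ is the normal Hall $p'$-subgroup; the latter is the unique subgroup of its order, hence characteristic, hence $\alpha$-invariant. Since $\alpha$ respects this $\alpha$-invariant direct product, $C_G(\alpha)=C_P(\alpha)\times C_H(\alpha)$. Now $C_P(\alpha)$ is a $p$-group contained in $C_G(\alpha)$, which by hypothesis has no element of order $p$; hence $C_P(\alpha)=1$ and $C_G(\alpha)=C_H(\alpha)\le H$. As $H$ centralizes $P$ in the direct product, $C_G(\alpha)\le C_G(P)$, i.e. $C_G(\alpha)$ centralizes $P$. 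This direction uses only elementary coprime-action facts and neither of the two main theorems.

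For the converse, suppose $C_G(\alpha)$ centralizes $P$. Then $C_{N_G(P)}(\alpha)\le C_G(\alpha)$ centralizes $P$, so Theorem~\ref{Second main theorem} gives that $G$ is $p$-nilpotent. The key observation, and the step I expect to be the only nonroutine point, is that this $p$-nilpotency already supplies \emph{for free} the freeness hypotheses required to invoke Theorem~\ref{main thm}: $p$-nilpotency is inherited by every subgroup and every quotient, hence by every section, whereas $PSL(2,2^r)$ and $Sz(2^r)$ are nonabelian simple groups (recall $r=|\alpha|$ is prime) and are therefore not $p$-nilpotent for any prime dividing their order, in particular for the primes $p\mid 2^r+1$ and $p\mid 4^r+1$ appearing in Theorem~\ref{main thm}. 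Consequently a $p$-nilpotent group can contain neither of them as a section, so $G$ is simultaneously $PSL(2,2^r)$-free and $Sz(2^r)$-free, and the relevant hypothesis of Theorem~\ref{main thm} is satisfied regardless of which divisibility case $p$ falls into.

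Since $C_G(\alpha)$ centralizes $P$ it in particular normalizes $P$, so Theorem~\ref{main thm} now applies and yields that $G$ is $p$-closed, i.e. $P\trianglelefteq G$. Combining this with the $p$-nilpotency from the previous step, $G$ has both a normal Sylow $p$-subgroup $P$ and a normal Hall $p'$-subgroup $H$; since $P\cap H=1$ and $|P|\,|H|=|G|$, these two normal subgroups commute and $G=P\times H$, as required. Thus once the freeness caveats of Theorem~\ref{main thm} are seen to be automatic under $p$-nilpotency, the corollary is a direct synthesis of Theorems~\ref{Second main theorem} and~\ref{main thm}.
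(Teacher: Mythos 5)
Your proof is correct, and both directions go through; but the converse takes a genuinely different route from the paper's. The paper, after obtaining $p$-nilpotency (hence $p$-separability) from Theorem~\ref{Second main theorem}, does \emph{not} invoke Theorem~\ref{main thm} at all: it appeals directly to Proposition~\ref{main lemma}, which gives $p$-closure for $p$-separable groups with no freeness hypotheses whatsoever, so the question of whether $G$ contains $PSL(2,2^r)$ or $Sz(2^r)$ as a section never arises. You instead invoke the full Theorem~\ref{main thm} and discharge its freeness hypotheses by the (correct) observation that $p$-nilpotency passes to all sections while these simple groups have order divisible by the relevant $p$ and hence are not $p$-nilpotent. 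Your argument is valid, but it routes through the heavier, classification-dependent Lemma~\ref{clasification} underlying Theorem~\ref{main thm}, whereas the paper's use of Proposition~\ref{main lemma} keeps this final step elementary (the CFSG input is then confined to Theorem~\ref{Second main theorem} itself). What your version buys is a clean illustration that the freeness caveats of Theorem~\ref{main thm} are vacuous for $p$-nilpotent groups; what the paper's version buys is a shorter dependency chain. Your treatment of the other direction and of the degenerate case $p\notin\pi(G)$ matches the paper's in substance.
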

  
  \section{The proofs of Theorems \ref{Second main theorem} and \ref{gen. Frob}}

   We first prove Theorem \ref{gen. Frob} which we need in  of the proof of Theorem \ref{Second main theorem}.
   
  \begin{lemman}\label{simple Groups}
  	Let $H=PSU(3,2^n)$ and $K=Sz(2^n)$ where $n$ is odd and $n>1$, and let $P$ and $Q$ be Sylow $2$-subgroups of $H$ and $K$, respectively. Then $N_H(P)$ and $N_K(\Omega_1(Q))$ are not $2$-nilpotent.
  \end{lemman}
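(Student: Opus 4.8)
The plan is to prove this by explicitly analyzing the normalizer structure of a Sylow $2$-subgroup in each of these families of simple groups of Lie type, using their known BN-pair / root-system descriptions. The key fact I would rely on is the following standard observation: a group $X$ is $2$-nilpotent if and only if it has a normal $2$-complement, and by Burnside's normal $p$-complement theorem, if $X$ is $2$-nilpotent then $N_X(S)/C_X(S)$ is a $2$-group for every $2$-subgroup $S$; equivalently, the normalizer of a Sylow $2$-subgroup cannot contain an element of odd order acting nontrivially (by conjugation) on it. So to show $N_H(P)$ is \emph{not} $2$-nilpotent, it suffices to exhibit an element of odd order in $N_H(P)$ that does not centralize $P$, and similarly for $N_K(\Omega_1(Q))$.

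For the Suzuki group $K = Sz(2^n)$ with $q = 2^n$, I would use its standard description. A Sylow $2$-subgroup $Q$ has order $q^2$, and the normalizer $N_K(Q)$ is a Frobenius group of order $q^2(q-1)$, the point stabilizer in the natural $2$-transitive action, with a cyclic complement $T$ of order $q-1$ acting fixed-point-freely (apart from the identity) on $Q$. Since $n > 1$ gives $q - 1 > 1$ odd, this cyclic complement is a nontrivial odd-order group acting nontrivially on $Q$. The subtlety the statement asks about is $\Omega_1(Q)$ rather than $Q$ itself: the Sylow $2$-subgroup of $Sz(q)$ has exponent $4$ and $\Omega_1(Q) = Z(Q)$ has order $q$. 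I would argue that the torus $T$ normalizes $Q$, hence normalizes the characteristic subgroup $\Omega_1(Q)$, and that $T$ still acts nontrivially on $\Omega_1(Q) = Z(Q)$ — indeed the complement acts transitively on the involutions of $Z(Q)$ — so $N_K(\Omega_1(Q)) \supseteq Q \rtimes T$ contains an odd-order element not centralizing $\Omega_1(Q)$, and is therefore not $2$-nilpotent.

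For $H = PSU(3,q)$ with $q = 2^n$, I would again pass to the Sylow $2$-normalizer. Here a Sylow $2$-subgroup $P$ has order $q^3$, and $N_H(P)$ is the Borel subgroup $B = P \rtimes T$ where $T$ is a cyclic torus; the relevant point is that in the unitary case the torus acting on the unipotent radical has order $(q^2-1)/\gcd(3,q+1)$, which is divisible by $q-1 > 1$ and hence has nontrivial odd part acting nontrivially on $P$. Thus $N_H(P) = B$ contains an odd-order element moving $P$ nontrivially, so $B$ is not $2$-nilpotent. The cleanest way to present both cases uniformly is to note that the Sylow $2$-normalizer in a simple group of Lie type in its defining characteristic is the Borel subgroup, which has a nontrivial torus quotient, and a nontrivial torus cannot centralize the full unipotent radical.

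The main obstacle I anticipate is the $\Omega_1(Q)$ twist in the Suzuki case: one must verify that restricting from $Q$ to $\Omega_1(Q) = Z(Q)$ does not kill the torus action, i.e. that the complement $T$ does not centralize $Z(Q)$. This requires knowing the explicit action of the torus on the center of the Sylow subgroup in $Sz(q)$ (it acts with the single nontrivial orbit on involutions of $Z(Q)$, up to the Frobenius-group structure), rather than a purely formal argument; for $PSU(3,q)$ the analogous care is needed only if one chose to work with $\Omega_1(P)$, but since the lemma uses $N_H(P)$ directly the unitary case is more routine. I would therefore spend most of the proof pinning down the Frobenius structure of $N_K(Q)$ and the fact that its odd-order complement acts nontrivially on $\Omega_1(Q)$, citing the standard structure theory of $Sz(q)$ and $PSU(3,q)$ (e.g. the tables of maximal subgroups or Suzuki's original description) for the precise orders.
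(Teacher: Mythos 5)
Your argument is correct, but it takes a genuinely different route from the paper's. You prove non-$2$-nilpotency ``from below,'' by exhibiting an odd-order torus element that normalizes but does not centralize the relevant $2$-subgroup and then invoking the easy direction of the Frobenius normal $p$-complement criterion. (Your attribution of that criterion to Burnside is slightly off --- Burnside's theorem is the converse statement for a Sylow subgroup central in its normalizer --- but the fact you actually use is true and elementary: in a $2$-nilpotent group $X$ with normal $2$-complement $M$, an odd-order element $x$ normalizing a $2$-subgroup $S$ lies in $M$, so $[x,S]\leq M\cap S=1$.) The paper instead argues ``from above,'' by a fusion/transfer argument against simplicity: for $PSU(3,2^n)$ it quotes Suzuki's result that the Sylow $2$-subgroup is a T.I.\ set, so $N_H(P)$ controls fusion in $P$ and $2$-nilpotency of $N_H(P)$ would force $H$ itself to be $2$-nilpotent; for $Sz(2^n)$ it uses Higman's identification $\Omega_1(Q)=Z(Q)$ to see that $\Omega_1(Q)$ is weakly closed in $Q$ and then applies Gr\"un's theorem. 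Your route buys concreteness and avoids transfer theory entirely, at the price of needing explicit local structure: the Frobenius structure of the point stabilizer $Q\rtimes T$ in $Sz(q)$, and the nontriviality of the torus action on the unipotent radical of $PSU(3,2^n)$, which does require a small matrix computation or a citation. Note that the ``$\Omega_1$ twist'' you single out as the main obstacle is actually free once the Frobenius property of $Q\rtimes T$ is in hand: $T$ acts fixed-point-freely on all of $Q\setminus\{1\}$, hence on $U\setminus\{1\}$ for \emph{any} nontrivial characteristic subgroup $U$ of $Q$, so you do not even need the identification $\Omega_1(Q)=Z(Q)$ that both you and the paper lean on. The paper's route, by contrast, needs only the qualitative T.I./weak-closure facts together with simplicity, and no information about how the odd part of the normalizer acts.
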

  \begin{proof}[\textbf{Proof}]
  	Let $G=PGU(3,2^n)$ where $n$ is odd. A characterization of $G$ is given in \cite[Theorem 1]{8} and we see by \cite[Lemma 2]{8} that $G$ has  a T.I. Sylow $2$-subgroup. It follows that $H$ also has a T.I. Sylow subgroup, that is, $P\cap P^h=1$ for $h\in H-N_H(P)$. Then $N_H(P)$ controls $H$-fusion in $P$ by \cite[Thereorem 7.4.6]{3}. Hence, if $N_H(P)$ is $2$-nilpotent, then $H$ is $2$-nilpotent. This is not possible as $H$ is a simple group.
  	
  	Now set $U=\Omega_1(Q)$, which is the group generated by the all involutions of $Q$. Note that $Q$ is called a Suzuki-$2$-group and studied in \cite{9}. In this paper, it is showed that $U=Z(Q)$, and hence $U$ is weakly closed  in $Q$. If $N_K(U)$ is $2$-nilpotent, then $K$ is not simple by Gr\"{u}n's Thereom (see \cite[Theorem 7.4.5]{3}). Thus, $N_K(U)$ is not $2$-nilpotent.
  \end{proof}
  \begin{proof}[\textbf{Proof of Theorem \ref{gen. Frob}}]
  	Let $G$ be a minimal counterexample to the theorem and let $P$ be an $A$-invariant Sylow $p$-subgroup of $G$. We first show that $p=2$ by the Thompson $p$-nilpotency theorem. First assume that $p$ is odd. 
  	Let $J(P)$ denote the Thompson subgroup of $P$. Note that $J(P)$ is characteristic in $P$ by \cite[Lemma 7.2]{1}. Then we see that  $J(P)$ and $Z(P)$ are $A$-invariant, and so both $N_G(J(P))$ and $N_G(Z(P))$ are $p$-nilpotent by the hypothesis. It follows that $G$ is $p$-nilpotent by the Thompson $p$-nilpotency theorem \cite[Theorem 7.1]{1}. Thus, we see that $p=2$. Note also that we may assume $O_2(G)=1$, since otherwise there is nothing to prove. 
  	
  	Assume that $O_{2'}(G)\neq 1$ and set $\overline G=G/O_{2'}(G)$. Let $X$ be a nontrivial $A$-invariant $2$-subgroup of $\overline G$ and let $V$ be the full inverse image of $X$ in $G$. Then $V$ is $A$-invariant and $V/O_{2'}(G)=X$, which is a $2$-group. Let $U$ be an $A$-invariant Sylow $2$-subgroup of $V$. We have $V=UO_{2'}(G)$, and so $\overline U=X$.  Note that $N_G(U)$ is $2$-nilpotent by  the hypothesis. We see that $\overline{N_G(U)}=N_{\overline G}(\overline U)=N_{\overline G}(X)$
  	by \cite[Lemma 2.17]{1}. It follows that $N_{\overline G}(X)$ is $2$-nilpotent as a homomorphic image of a $2$-nilpotent group. Thus, $\overline G$ satisfies the hypothesis, so $\overline G=G/O_{2'}(G)$ is $2$-nilpotent by the inductive argument. This yields that $G$ is $2$-nilpotent, which is a contradiction. Thus, $O_{2'}(G)=1$ as desired.
  	
  	Now let $N$ be a proper $A$-invariant normal subgroup of $G$. Clearly, $N$ satisfies the hypothesis, and hence $N$ is $2$-nilpotent by the minimality of $G$. Let $H$ be a normal Hall $2'$-subgroup of $N$. Then $H\lhd G$ as $H$ is characteristic in $N$, and so $H\leq O_{2'}(G)=1$, which forces that $N$ is a $2$-group. Next we see that $N\leq O_2(G)=1$ and $G$ is characteristically simple.
  	
  	Consequently, $G = G_1 \times G_2 \times \dots \times G_n$ where $G_i$ are isomorphic simple groups and
  	$A $ acts transitively on $\{ G_i
  	\mid i = 1, . . . , n\}.$ Assume $n > 1$ and set $B=Stab_A(G_1)$. Let $U$ be a nontrivial $B$-invariant $2$-subgroup of $G_1$ and let $T=\{t_1,t_2,...,t_n\}$ be a right transversal set for $B$ in $A$. Without loss of generality, assume $t_1=1$ and $G_1^{t_i}=G_i$ for $i=1,2,...,n$. It follows that $U^{t_i}\leq G_i$, and so $A$ acts transitively on $\{U^{t_i} \mid i=1,2,...,n \}$. Thus, we obtain that $V=U^{t_1}\times U^{t_2}\times \dots \times U^{t_n}$ is $A$-invariant, and so $N_G(V)=\prod_{i=1}^{n}N_{G_i}(U^{t_i})$ is $2$-nilpotent by the hypothesis. In particular, we get that $N_{G_1}(U)$ is $p$-nilpotent, that is, the pair $(B,G_1)$ satisfies the hypothesis. By the minimality of $G$, we obtain that $G_1$ is $2$-nilpotent. This contradiction shows that $n=1$, that is, $G$ is simple.
  	
  	We see from \cite[Table 5]{2} that $G$ is a simple group of Lie type and  $G\cong \! ^d\!\sum(q^r)$ and $C_G(A)\cong \!^d\!\sum(q)$ for a root system $\sum$ and a prime power $q$ where $r=|A|$  by (\cite[Theorem 4.9.1 (a) and (c)]{3}). We also see that $2\in \pi(C_G(A))$ by their known orders. Since each $2$-subgroup of $C_G(A)$ is $A$-invariant, we obtain that $C_G(A)$ is $2$-nilpotent by the Frobenius theorem. In particular, we obtain that $C_G(A)$ is solvable. By the list of simple groups of Lie types, we see that only possibility for solvable $C_G(A)$ are the following groups; $PSU(3,2)$, $PSL(2,2)$, $PSL(2,3)$ and $Sz(2)$.
  	Note that $PSL(2,3)\cong A_4$, the alternating group on $4$-letters, and it is not $2$-nilpotent. The rest of the groups are $2$-nilpotent, which can be checked easily. Thus, $G$ is isomorphic to the one of the groups; $PSU(3,2^r)$, $PSL(2,2^r)$, and $Sz(2^r)$ where $r$ is odd. Clearly, $\Omega_1(P)$ is $A$-invariant since it is characteristic in $P$. It follows that both $N_G(P)$ and $N_G(\Omega_1(P))$ are $2$-nilpotent by the hypothesis of our theorem.
  	Hence, we see that $G\neq PSU(3,2^r)$ and $G\neq Sz(2^r)$ by Lemma \ref{simple Groups}, and so $G=PSL(2,2^r)$. In this case, we see that $P$ is abelian by \cite[Theorem 8.6.3 (b)]{11}. Since $N_G(P)$ is $2$-nilpotent, we get that $G$ is $2$-nilpotent by the Burnside $p$-nilpotency theorem (see \cite[Theorem 7.4.3]{3}). 
  	This final contradiction completes the proof.  
  \end{proof}
  \raggedbottom
  
   The following representation theoretic fact is used in the proof of Theorem \ref{Second main theorem}.
  
  \begin{lemman}\cite[Theorem 3.4.4]{6}\label{repthm}
  	Let $A=PQ$ be a group such that $Q$ is an elementary abelian $q$-group and $P$ is  a cyclic group of prime order $p$. Assume that $Q$ is a minimal normal subgroup of $A$ and $C_A(Q)=Q$. Let $V$ be a faithful $FA$-module such that the characteristic of the filed $F$ is coprime to $|A|$. Then $C_V(P)\neq 0$.
  \end{lemman}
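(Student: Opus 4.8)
The plan is to pass to a splitting field and then read off the conclusion from Clifford theory of the semidirect product $A=QP$. First I would replace $F$ by an algebraic closure $\overline{F}$: since $\mathrm{char}\,F$ is coprime to $|A|$, extension of scalars preserves faithfulness and satisfies $\dim C_{\overline{V}}(P)=\dim C_V(P)$ where $\overline{V}=\overline{F}\otimes_F V$, so it suffices to prove $C_{\overline{V}}(P)\neq 0$ over an algebraically closed field. In particular $\overline{F}A$ is semisimple, so $\overline{V}$ is a direct sum of irreducibles and it is enough to control the $P$-fixed space of each irreducible constituent.

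Next I would pin down the action of $P$ on $Q$. The kernel of the conjugation action of $A$ on $Q$ is $C_A(Q)=Q$, so $A/Q$ embeds in $\mathrm{Aut}(Q)$; this forces $P\cap Q=1$ (as $|P|=p$ is prime and $P\not\le Q$) and $A/Q\cong P$ acts nontrivially. If $p=q$ then $A$ is a $q$-group, its minimal normal subgroup $Q$ is central, and $C_A(Q)=A\neq Q$, a contradiction; hence $p\neq q$ and the action of $P$ on $Q$ is coprime. Then $Q=[Q,P]\times C_Q(P)$ with both factors normal in $A$, and minimality of $Q$ forces each factor to be $1$ or $Q$; since $P\not\le C_A(Q)=Q$ we cannot have $C_Q(P)=Q$, so $C_Q(P)=1$. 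Thus $P$ acts fixed-point-freely on $Q$, and by coprimality also on the dual group $\hat{Q}$ of linear characters, so every nontrivial $P$-orbit on $\hat{Q}$ has size exactly $p$.

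Now I would classify the irreducible $\overline{F}A$-modules by Clifford theory over the abelian normal subgroup $Q$. The characters trivial on $Q$ give the $p$ one-dimensional modules inflated from $A/Q\cong P$; among these only the trivial module $1_A$ has nonzero $P$-fixed points. Every other irreducible lies over a nontrivial $\lambda\in\hat{Q}$, whose inertia group is $Q$ because $\mathrm{Stab}_P(\lambda)=1$; such a module is $W=\mathrm{Ind}_Q^A\lambda$, of dimension $p$. The key computation is $\mathrm{Res}_P W$: since $A=PQ$ has a single $(P,Q)$-double coset and $P\cap Q=1$, Mackey's formula gives $\mathrm{Res}_P\,\mathrm{Ind}_Q^A\lambda\cong\mathrm{Ind}_1^P\,1=\overline{F}P$, the regular representation of $P$. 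Hence every such $W$ has a one-dimensional $P$-fixed space.

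Finally I would assemble these facts. If $C_{\overline{V}}(P)=0$, then no irreducible constituent of $\overline{V}$ can have nonzero $P$-fixed points, so by the previous paragraph every constituent is one of the nontrivial linear modules inflated from $A/Q$. But all of these are trivial on $Q$, so $Q$ lies in the kernel of the action on $\overline{V}$, contradicting faithfulness since $Q\neq 1$. Therefore $C_{\overline{V}}(P)\neq 0$. I expect the main obstacle to be the Mackey computation identifying $\mathrm{Res}_P W$ with the regular representation of $P$ — equivalently, checking that each nonlinear irreducible picks up the trivial character of $P$ exactly once; the fixed-point-free action established in the second step is precisely what guarantees that the inertia group is $Q$ and hence that every nonlinear irreducible has this induced shape.
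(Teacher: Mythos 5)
The paper does not actually prove this lemma: it is quoted verbatim from Gorenstein \cite[Theorem 3.4.4]{6}, so there is no in-paper argument to compare against. Your proof is correct and is essentially the standard textbook one: after extending scalars to $\overline{F}$, the hypotheses force $C_Q(P)=1$ and hence $C_{\widehat{Q}}(P)=1$, so every irreducible constituent of $\overline{V}$ is either a linear character inflated from $A/Q\cong P$ or an induced module $\mathrm{Ind}_Q^A\lambda$ whose restriction to $P$ is the regular module via Mackey; since the constituents with trivial $P$-fixed space are then exactly the nontrivial linear characters of $A/Q$, they cannot exhaust $\overline{V}$ without killing the faithful action of $Q$. (Gorenstein's own argument is the same computation phrased through the $Q$-weight-space decomposition of $V$, with $P$ permuting the nontrivial weight spaces in regular orbits.) One small caveat: your exclusion of $p=q$ via ``$C_A(Q)=A\neq Q$'' tacitly assumes $A\neq Q$; in the degenerate reading $P\le Q=A$ the statement itself is false (a faithful linear character of a cyclic $Q$ has no nonzero fixed vectors), so $P\not\le Q$ must be taken as part of the intended hypothesis --- as it is in every application in this paper, where the lemma is only invoked with $p\neq q$.
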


  \begin{proof}[\textbf{Proof of Theorem \ref{Second main theorem}}]
  	Let $P$ be an $\alpha$-invariant Sylow $p$-subgroup of $G$ where $p\notin \pi(C_G(\alpha))$. It is obvious that if $G$ is $p$-nilpotent, then the $p'$-subgroup $C_{N_G(P)}(\alpha)$ centralizes $P$. Thus, we only show that if $C_{N_G(P)}(\alpha)$ centralizes $P$, then $G$ is $p$-nilpotent. Let $G$ be a minimal counterexample to the theorem. We shall derive a contradiction over a series of steps.\vspace{0.13 cm}
  	
  	$\textbf{1)}$  $N_G(P)$ is $p$-nilpotent.
  	\vspace{0.13 cm}
  	
  	We have $C_{N_G(P)}(\alpha)\leq C_G(P)$ by the hypothesis. Note also that both $N_G(P)$ and $C_G(P)$ are $\alpha$-invariant. Then we see that $C_{N_G(P)/C_G(P)}(\alpha)=1$, and so $N_G(P)/C_G(P)$ is nilpotent by the Thompson theorem. Appealing to the Schur-Zassenhaus theorem \cite[Theorem 3.8]{1}, we see that there exists a Hall $p'$-subgroup $H$ of $N_G(P)$. Thus, we obtain that $[H,P]\leq C_G(P)$ as $N_G(P)/C_G(P)$ is nilpotent. On the other hand, $H$ normalizes $P$, which yields that $[H,P]\leq P\cap C_G(P)=Z(P)$. It follows that $[H,P,P]=1$, and so $[H,P]=1$ due to the coprimeness (see \cite[Lemma 4.29]{1}). Then we have $H\lhd N_G(P)$.\vspace{0.13 cm}
  	
  	$\textbf{2)}$ $O_{p'}(G)$ is trivial.\vspace{0.13 cm}

  	Assume that $O_{p'}(G)\neq 1$ and set $\overline G=G/O_{p'}(G)$. Note that $\overline {N_G(P)}=N_{\overline G}(\overline P)$ by \cite[Lemma 2.17]{1}, and so we obtain that $N_{\overline G}(\overline P)$ is $p$-nilpotent by Step (1).  We have $\overline{C_G(\alpha)}=C_{\overline G}(\alpha)$ due to the coprimeness, and hence $p\notin \pi(C_{\overline G}(\alpha)).$ It follows that $C_{N_{\overline G}(\overline P)} (\alpha)$ is a $p'$-group, which yields $C_{N_{\overline G}(\overline P)} (\alpha)$ centralizes $\overline P$ as  $N_{\overline G}(\overline P)$ is both $p$-nilpotent and $p$-closed. Consequently, $\overline G$ satisfies the hypothesis, and hence we obtain that $\overline G=G/O_{p'}(G)$ is $p$-nilpotent by the minimality of $G$. It follows that $G$ is $p$-nilpotent, which is a contradiction. Hence, we get $O_{p'}(G)=1$ as desired.\vspace{0.13 cm}
  	
  	$\textbf{3)}$ $O_p(G)\neq 1$ and $G/O_p(G)$ is $p$-nilpotent.\vspace{0.13 cm}
  	
  	Since $G$ is not $p$-nilpotent, there exists a nontrivial $\alpha$-invariant $p$-subgroup $U$ of $G$ such that $N_G(U)$ is not $p$-nilpotent by Theorem \ref{gen. Frob}. Among such $\alpha$-invariant $p$-subgroups, choose $U$ of maximal possible order. Clearly, $N_G(U)$ is also $\alpha$-invariant and we may choose an $\alpha$-invariant Sylow $p$-subgroup $V$ of $N_G(U)$. Without loss of generality, we may assume that $V$ is contained in $P$, that is, $V=N_P(U)$. Note that $U<P$ as $N_G(P)$ is $p$-nilpotent by Step (1), and so $U<V$. The maximality of $U$ forces that $N_G(V)$ is $p$-nilpotent. Write $H=N_G(U)$ and assume that $H<G$. Clearly, $p\notin \pi(C_H(\alpha))$. Moreover, the $p'$-group $C_{N_H(V)}(\alpha)$ centralizes $V$, as $N_H(V)\leq N_G(V)$ is $p$-nilpotent. Then we see that $H$ satisfies the hypothesis, and so $H=N_G(U)$ is $p$-nilpotent by the minimality of $G$. This contradiction shows that $H=G$, that is, $U\lhd G$. It follows that $1<U\leq O_p(G)$, and in particular $ O_p(G)\neq 1$. It is routine to see that $G/O_p(G)$ satisfies the hypothesis, and hence $G/O_p(G)$ is $p$-nilpotent by the minimality of $G$.\vspace{0.13 cm}
  	
  	$\textbf{4)}$ $P$ is a maximal $\alpha$-invariant subgroup of $G$.\vspace{0.13 cm}
  	
  	Let $M$ be  a proper $\alpha$-invariant subgroup of $G$ such that $P\leq M$. Clearly, $M$ satisfies the hypothesis, and so $M$ is $p$-nilpotent. Let $K$ be a normal Hall $p'$-subgroup of $M$. It follows that $[K,O_{p}(G)]\leq K\cap O_{p}(G) =1$, that is, $K\leq C_G( O_{p}(G))$. Note that $G$ is a $p$-separable group by Step (3). Since $O_{p'}(G)=1$ by Step (2),  we obtain that that $C_G( O_{p}(G))\leq O_p(G)$ by the Hall-Higman theorem (see \cite[Theorem 3.21]{1}), and so $K=1$. Then $M$ is a $p$-group, which forces that $M=P$.\vspace{0.13 cm}
  	
  	$\textbf{5)}$ Final contradiction.\vspace{0.13 cm}
  	
  	Write $\overline G=G/O_p(G)$.
  	It follows by Step (4) that $\overline P$ is a maximal $\alpha$-invariant subgroup of $\overline G$. Note that $G$ has a Hall $p'$-subgroup as $G$ is $p$-separable by Step (3). Let $X$ be an $\alpha$-invariant Hall $p'$-subgroup of $ G$. We have that $\overline X\lhd \overline G$ by Step (3). Now consider the coprime action of $\langle \alpha \rangle \overline P$ on $\overline X$. Note that we may choose an $\alpha$-invariant Sylow $q$-subgroup $Q$ of $X$ for some $q\in \pi( X)$ such that $\overline Q$ is  $\langle \alpha \rangle \overline P$-invariant. Then the group $\overline {PQ}$ is an $\alpha$-invariant subgroup of $\overline G$. Thus, we get that $\overline G= \overline {PQ}$ by the maximality of $\overline{P}$, which yields that $X=Q$. Since $\overline P$ $\Phi(\overline Q)$ is also $\langle \alpha \rangle \overline P$-invariant, we may similarly conclude that $\Phi(\overline Q)=1$ by using the maximality of $\overline P$. As a consequence, we obtain that $Q\cong \overline Q$ is an elementary abelian $q$-group. Since $p\notin \pi(C_G(\alpha))$, we get that $C_G(\alpha)$ is a $q$-group. Then we see that $C_G(\alpha)\leq Q$. If $Q=C_G(\alpha)$, then $P=[G,\alpha]\lhd G$ , which is a contradiction by Step (1). Thus, $[Q,\alpha]\neq 1$. As $Q$ is abelian, we have that $Q=[Q,\alpha]\times C_Q(\alpha)$ by \cite[Theorem 4.34]{1}.   In particular, we have that $C_{[Q,\alpha]}(\alpha)=1$. Let $U$ be a minimal $\alpha$-invariant subgroup of $[Q,\alpha]$. Now consider the coprime action of $A:=\langle\alpha \rangle U$ on $V:=O_p(G)/\Phi(O_p(G))$. Then we may regard $V$ as an $FA$-module where $F=\mathbb Z_p$. Note that $C_V(\alpha)=0$ since $C_P(\alpha)=1$. Moreover, if $u$ acts trivially on $V$ for some $u\in U$, then $u\in C_G(O_p(G))$ by \cite[Corollary 3.29]{1}.  We obtain that $u=1$ as $ C_G(O_p(G))\leq O_p(G)$.  It follows that $V$ is a faithful $FA$-module. However, we get that $C_V(\alpha)\neq 0$ by Lemma \ref{repthm}. This final contradiction completes the proof.
  \end{proof}

\section{The proofs of Theorem \ref{main thm} and Corollary \ref{corol.}}

\begin{propositionn}\label{main lemma}
		Let $\alpha$ be a	coprime automorphism  of $G$ of prime order and let $P$ be an $\alpha$-invariant Sylow $p$-subgroup of $G$. Assume that $p\notin \pi(C_G(\alpha))$ and $G$ is $p$-separable. If $C_G(\alpha)$ normalizes $P$, then $G$ is $p$-closed.
\end{propositionn}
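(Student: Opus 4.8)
The plan is to take $G$ a minimal counterexample and, through a chain of quotients, to whittle $G$ down to a solvable group of the shape $P\ltimes Q$ with $Q$ elementary abelian, where the representation-theoretic Lemma~\ref{repthm} produces the contradiction. First note that $p\notin\pi(C_G(\alpha))$ forces $C_P(\alpha)=1$. If $O_p(G)\neq 1$, then $G/O_p(G)$ inherits all the hypotheses and is smaller, hence $p$-closed by minimality; since $O_p(G)\le P$ this pulls back to $P\lhd G$, a contradiction. So $O_p(G)=1$, and as $G$ is $p$-separable and nontrivial, $N:=O_{p'}(G)\neq 1$. Passing to $G/N$ and arguing the same way gives $PN\lhd G$; if $PN<G$ then minimality applied to $PN$ gives $P\lhd PN$, so $P$ is characteristic in $PN\lhd G$ and $P\lhd G$, again a contradiction. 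Therefore $G=PN=P\,O_{p'}(G)$.

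Next I would cut $N$ down to a single prime. For each $q\mid |N|$ choose a $\langle\alpha\rangle P$-invariant Sylow $q$-subgroup $Q_q$ of $N$ by coprime action. If $PQ_q<G$ for every $q$, then minimality forces $P\lhd PQ_q$, i.e.\ $[P,Q_q]\le P\cap N=1$ for all $q$; since $N$ is generated by these Sylow subgroups we get $[P,N]=1$ and $P\lhd G$, a contradiction. Hence $G=PQ$ with $Q:=Q_q$ a $q$-group for a single prime $q\neq p$. Finally, if $\Phi(Q)\neq 1$, then $G/\Phi(Q)$ is $p$-closed by minimality, which yields $[Q,P]\le\Phi(Q)$, and the coprime action of $P$ on $Q$ then gives $[Q,P]=1$, a contradiction. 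So $Q$ is elementary abelian.

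Now $G=P\ltimes Q$ is solvable with $F(G)=Q$ (because $O_p(G)=1$), so $C_G(Q)\le Q$ and hence $C_P(Q)=1$: the group $P$ acts faithfully on $Q$. From the hypothesis, $C_G(\alpha)=C_Q(\alpha)$ normalizes $P$ and lies in $Q$, so it centralizes $P$; writing $Q=C_Q(P)\times[Q,P]$ as an $\alpha$-invariant decomposition, I obtain $C_{[Q,P]}(\alpha)=1$, while $[Q,P]\neq 1$ since $P\not\lhd G$. Choosing a minimal $\alpha$-invariant subgroup $U$ of $P$, the group $A:=U\langle\alpha\rangle$ has $U$ elementary abelian and minimal normal with $C_A(U)=U$. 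Since $U$ acts faithfully on the completely reducible module $[Q,P]$, some irreducible constituent $V$ satisfies $C_U(V)=1$, and because $\langle\alpha\rangle$ acts on $V$ without nonzero fixed points one checks that no conjugate of $\langle\alpha\rangle$ can centralize $V$, so $C_A(V)=1$ and $V$ is a faithful $\mathbb{F}_q A$-module. Applying Lemma~\ref{repthm} gives $C_V(\alpha)\neq 0$, contradicting $C_{[Q,P]}(\alpha)=1$; this completes the proof.

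The hard part is the passage to the solvable model $G=P\ltimes Q$: the reductions eliminating $O_p(G)$ and forcing $N$ to be a single elementary abelian $q$-group are precisely what make $G$ solvable and thereby guarantee, via $C_G(F(G))\le F(G)$, the faithfulness $C_P(Q)=1$ on which Lemma~\ref{repthm} depends. Verifying that a suitable irreducible constituent $V$ of $[Q,P]$ is genuinely faithful for $A=U\langle\alpha\rangle$ is the delicate point, while the rest is routine coprime-action bookkeeping mirroring the final step of the proof of Theorem~\ref{Second main theorem}.
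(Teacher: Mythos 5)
Your proof is correct, and your reductions (killing $O_p(G)$, forcing $G=PO_{p'}(G)$, collapsing $O_{p'}(G)$ to a single elementary abelian $q$-group $Q$) are essentially identical to the paper's, differing only in bookkeeping (e.g.\ you pull $G=PN$ back from the quotient $G/N$ being $p$-closed, where the paper invokes Hall--Higman directly). The endgame, however, is genuinely different. The paper continues the reduction on the $p$-side: it shows $\Phi(P)=1$ and in fact that $P$ itself is a minimal $\alpha$-invariant subgroup, then applies Lemma~\ref{repthm} to $A=\langle\alpha\rangle P$ acting on all of $Q$ only to conclude $C_G(\alpha)\neq 1$, and needs one further inductive pass through $G/C_G(\alpha)$ to reach the final contradiction. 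You instead leave $P$ alone, take a minimal $\alpha$-invariant $U\leq P$, and apply the lemma to a faithful irreducible constituent $V$ of $[Q,P]$, where $C_{[Q,P]}(\alpha)=1$ gives an immediate contradiction; this mirrors Step~5 of the paper's proof of Theorem~\ref{Second main theorem} rather than the paper's own proof of the Proposition, and it buys you a shorter argument at the cost of the constituent analysis. One point to make explicit: faithfulness of $U$ on the completely reducible module $[Q,P]$ does not by itself yield a single faithful constituent (the kernels on distinct constituents could be distinct proper subgroups); what saves you is that each $C_U(V_i)$ is normal in $A$ and contained in the minimal normal subgroup $U$, hence equals $1$ or $U$, and not all can equal $U$. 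Similarly $C_A(V)=1$ is cleanest via $[C_A(V),U]\leq C_A(V)\cap U=1$ forcing $C_A(V)\leq C_A(U)=U$. With those two sentences supplied, the argument is complete.
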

	
	\begin{proof}[\textbf{Proof}]
		Let $G$ be a minimal counterexample to the proposition. Let $P$ be an $\alpha$-invariant Sylow $p$-subgroup of $G$. Suppose that we have a nontrivial proper $\alpha$-invariant subgroup $N$ of $G$. An $\alpha$-invariant Sylow $p$-subgroup of $N$ must be contained in $P$ as it is the unique $\alpha$-invariant Sylow $p$-subgroup of $G$. This forces that $P\cap N$ is an $\alpha$-invariant Sylow $p$-subgroup of $N$.  Clearly, $C_N(\alpha)$ normalizes $P\cap N$. Thus, we see that $N$ satisfies the the hypothesis of the proposition as $p\notin \pi(C_N(\alpha))\subseteq \pi(C_G(\alpha))$. Now assume that $N$ is also normal in $G$ and set $\overline G=G/N$. Due to the coprimeness, we have $C_{\overline G}(\alpha)=\overline{C_G(\alpha)}$, and hence we see that $C_{\overline G}(\alpha)$ normalizes $\overline P$ and  $p\notin \pi(C_{\overline G}(\alpha))$. Thus, both $N$ and $G/N$ are $p$-closed by the minimality of $G$. We shall use these observations throughout the proof.
		
		Assume first that $O_p(G)\neq 1$. It follows  by the first paragraph that $P/O_p(G)\lhd G/O_p(G)$. Then we get $P\lhd G$, which is not the case. We see that $O_p(G)=1$, and so $O_{p'}(G)\neq 1$ since $G$ is $p$-separable. 
		
		Consider the group $T=PO_{p'}(G)$, which is clearly $A$-invariant. Suppose $T<G$. Since $T$ satisfies the hypothesis, we get $P\lhd T$ by the minimality of $G$ and so $[P,O_{p'}(G)]=1$. Then we get $P\leq C_G(O_{p'}(G))\leq O_{p'}(G)$. The last inequality follows by the Hall-Higman theorem (see \cite[Theorem 3.21]{1}). As result we get $P=1$, which is a contradiction. Thus, we have $G=T$.
		 
		 Consider the coprime action of $\langle \alpha \rangle P$ on $O_{p'}(G)$. Let $Q$ be  an $\langle \alpha \rangle P$ -invariant Sylow $q$-subgroup of $O_{p'}(G)$ for a prime $q\in \pi(O_{p'}(G))$.  Assume that $Q\neq O_{p'}(G)$. Then $P$ is normal in $PQ$ by the inductive hypothesis, and hence $P$ acts trivially on $Q$.  We get that $[P,O_{p'}(G)]=1$ since $q$ is arbitrary, which yields that $P\lhd PO_{p'}(G)=G$. This contradiction shows that $Q=O_{p'}(G)$.
		 
		  Assume that $\Phi(Q)\neq 1$. Then $P\Phi(Q)/\Phi(Q)$ is normal in $G/\Phi(Q)$ by induction applied to $G/\Phi(Q)$. As a result, $P\Phi(Q)$ is normal in $G$. On the other hand,  $P$ is normal in $P\Phi(Q)$ by induction applied to $P\Phi(Q)$. It follows that $P$ is normal in $G$ as $P$ is characteristic in $P\Phi(Q)$, which is a contradiction. So, we get that $\Phi(Q)=1$, that is, $Q$ is elementary  abelian. Clearly, $\Phi(P)Q$ is an $\alpha$-invariant proper subgroup of $G$. It follows that $\Phi(P)\lhd \Phi(P)Q$ by induction, and so $[\Phi(P),Q]\leq \Phi(P)\cap Q=1$. Thus, we obtain that $\Phi(P)\leq C_G(Q)\leq Q$, which yields that $\Phi(P)=1$. Similarly, we observe that each proper $\alpha$-invariant subgroup of $P$ is trivial, that is, $P$ is a minimal $\alpha$-invariant subgroup of $G$. 
		  
		   Note that $C_G(\alpha)$ is a $q$-group since $p\notin \pi(C_G(\alpha))$, and hence $C_G(\alpha)\leq Q$. By the fact that $C_G(\alpha)$ normalizes $P$ and $Q\lhd G$, we obtain  $$[C_G(\alpha),P]\leq P\cap Q=1.$$ It follows that $C_G(\alpha)\lhd G$ since $Q$ is abelian. Now we claim that $C_G(\alpha)\neq 1$. Consider the coprime action of $A:=\langle \alpha \rangle P $ on $Q$. Note that $C_A(P)=P$ and $A$ acts faithfully on $Q$. We get that $C_Q(\alpha)\neq 1$ by Lemma \ref{repthm}, that is, $C_G(\alpha)\neq 1$. Note also that $Q\neq C_G(\alpha)$ as $[P,Q]\neq 1$, and so $PC_G(\alpha)$ is proper in $G$. We obtain that $PC_G(\alpha)/C_G(\alpha)$ is normal in $G/C_G(\alpha)$ by induction applied to $G/C_G(\alpha)$. Next we see that $PC_G(\alpha)$ is normal in $G$, and so we get $P\lhd G$, which is the final contradiction.  
	\end{proof}
	
\begin{lemman}\label{clasification}
	Let $G$ be a nonabelian simple group and let $\alpha$ be a coprime automorphism of $G$ of prime order $r$. Pick an $\alpha$-invariant $P\in Syl_p(G)$ for $p\in \pi(G)\setminus(\pi(C_G(\alpha)))$. If $C_G(\alpha)$ normalizes $P$, then one of the following holds;
	
	\begin{enumerate}
		\item[a)] $G=PSL(2,2^r)$ and  $r\geq 5$. Moreover, $p\geq 5$ and  $p$ is a divisor of $2^r+1$.
		
		\item[b)] $G=Sz(2^r)$ and $r\geq 7$. Moreover, $p\geq 7$ and  $p$ is a divisor of $2^r\pm \sqrt{2^{r+1}}+1$ where the sign $\pm$ is chosen such that $5$ divides $2^r\pm \sqrt{2^{r+1}}+1$.
	\end{enumerate} 
\end{lemman}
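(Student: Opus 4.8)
The plan is to route the argument through CFSG via the structure theory of automorphisms of simple groups, exactly as in the proof of Theorem~\ref{gen. Frob}. First I would note that $\alpha$ cannot be inner: since $Z(G)=1$, an inner automorphism of order $r$ would be induced by an element of order $r$, forcing $r\mid |G|$ and contradicting coprimeness. Hence $\alpha$ maps to a nontrivial element of order $r$ in $\mathrm{Out}(G)$. Because $\mathrm{Out}(G)$ is a $2$-group for alternating and sporadic $G$ while $2\mid |G|$, those families admit no coprime automorphism of odd prime order, so $G$ is a simple group of Lie type. I would then invoke \cite[Theorem 4.9.1]{3} as before to conclude that $\alpha$ is conjugate to a field automorphism, $G\cong {}^d\Sigma(q^r)$ and $C_G(\alpha)\cong {}^d\Sigma(q)$ for a root system $\Sigma$ and a prime power $q=p_0^k$, with $r=|\alpha|$.

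The first genuine reduction is that $p$ is not the defining characteristic $p_0$: if $p=p_0$ then $p$ divides the $q$-part of $|{}^d\Sigma(q)|=|C_G(\alpha)|$, against $p\notin\pi(C_G(\alpha))$. So $P$ is a Sylow subgroup in non-defining characteristic and $C_P(\alpha)=P\cap C_G(\alpha)=1$. The engine is then the hypothesis $C_G(\alpha)\leq N_G(P)$ together with $P\lhd N_G(P)$: as $N_G(P)$ is $p$-closed and $C_G(\alpha)$ is a $p'$-group, $C_G(\alpha)$ embeds into $N_G(P)/P$. For non-defining $p$, $N_G(P)$ is (essentially) the normalizer of the Sylow $p$-subgroup of a maximal torus, and $N_G(P)/P$ is controlled by a comparatively small group (a relative Weyl group). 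I would use the orders and structure of these torus normalizers to force $\Sigma$ to have (twisted) rank one and the base field to be $\mathbb{F}_2$: the full subfield subgroup ${}^d\Sigma(q)$ can contain no nonabelian simple section and, in the untwisted rank-one case, must be dihedral in order to fit inside a cyclic-torus normalizer. This leaves only $A_1$ and ${}^2B_2$ over $\mathbb{F}_2$, i.e. $G\cong PSL(2,2^r)$ with $C_G(\alpha)\cong PSL(2,2)\cong S_3$, or $G\cong Sz(2^r)$ with $C_G(\alpha)\cong Sz(2)\cong C_5\rtimes C_4$; the nearby families $PSU(3,2^r)$ and ${}^2G_2(3^r)$ drop out because their subfield subgroups do not embed in the relevant normalizer.

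It then remains to extract the arithmetic. In the $PSL(2,2^r)$ case $P$ is cyclic, lying in a cyclic torus of order $2^r\pm1$ with $N_G(P)\leq D_{2(2^r\pm1)}$; since $C_G(\alpha)\cong S_3$ has order divisible by $3$ and must sit inside this dihedral group, I need $3\mid 2^r\pm1$. As $r$ is an odd prime (coprimeness excludes $r=2$), $3\nmid 2^r-1$ while $3\mid 2^r+1$, so $P$ lies in the torus of order $2^r+1$ and $p\mid 2^r+1$; moreover $p\notin\pi(S_3)=\{2,3\}$ gives $p\geq5$, and $r=3$ is excluded since $3\mid|PSL(2,8)|$, leaving $r\geq5$. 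In the $Sz(2^r)$ case $P$ lies in one of the cyclic tori of order $2^r\pm\sqrt{2^{r+1}}+1$ (whose product is $4^r+1$ and which are coprime), with normalizer $C_{2^r\pm\sqrt{2^{r+1}}+1}\rtimes C_4$; since $C_G(\alpha)\cong Sz(2)$ contains an element of order $5$ and embeds here, $5$ must divide the relevant factor, so $p$ divides precisely the factor $2^r\pm\sqrt{2^{r+1}}+1$ that is divisible by $5$. Finally $p\notin\pi(Sz(2))=\{2,5\}$ and $3\nmid|Sz(2^r)|$ force $p\geq7$, while $r=3$ makes the $5$-divisible factor equal to $5$ (so $p=5$, impossible) and $r=5$ fails coprimeness since $5\mid|Sz(32)|$, leaving $r\geq7$.

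The step I expect to be the main obstacle is the middle one: proving that the subfield subgroup $C_G(\alpha)={}^d\Sigma(q)$ can be accommodated inside the non-defining Sylow normalizer $N_G(P)$ only for $A_1$ and ${}^2B_2$ over $\mathbb{F}_2$. This is where a genuine type-by-type examination of the orders and structure of torus normalizers in groups of Lie type is required, comparing $|{}^d\Sigma(q)|$ with $|N_G(P)|_{p'}$ and disposing of the higher-rank and exceptional families; the case $PSU(3,2^r)$, whose Sylow structure is already delicate (compare Lemma~\ref{simple Groups}), is likely the most demanding to rule out.
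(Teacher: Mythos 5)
Your reduction to groups of Lie type with $\alpha$ a field automorphism, and your endgame arithmetic for $PSL(2,2^r)$ and $Sz(2^r)$, match the paper's proof in substance (the paper also works inside the dihedral subgroup of order $2(2^r+1)$ and the maximal subgroups of order $4(2^r\pm\sqrt{2^{r+1}}+1)$, reaching the same conclusions about $p$ and $r$ by essentially the same divisibility considerations). The problem is the middle step. You propose to eliminate all types other than $A_1$ and ${}^2B_2$ over $\mathbb{F}_2$ by embedding $C_G(\alpha)\cong{}^d\Sigma(q)$ into $N_G(P)/P$ and comparing with torus normalizers, and you yourself flag this as ``the main obstacle'' requiring ``a genuine type-by-type examination.'' That examination is not carried out, and it is not routine: for non-defining $p$ the identification of $N_G(P)$ with a torus normalizer needs justification (it is a ``large prime'' phenomenon), and the quotient $N_G(P)/P$ is an extension of an abelian group by a subgroup of the (relative) Weyl group, which for high rank does contain large nonabelian simple sections, so the assertion that ${}^d\Sigma(q)$ ``can contain no nonabelian simple section'' sitting inside it needs an actual argument in each family. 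As written, the core of the lemma is a plan rather than a proof.

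The paper bypasses all of this with a single structural input you are missing: by Burgoyne--Griess--Lyons \cite[Theorem 1]{4}, the subfield subgroup $C=C_G(\alpha)\cong{}^d\Sigma(q)$ is a \emph{maximal} subgroup of $G\cong{}^d\Sigma(q^r)$ except for a short explicit list ($PSL(2,2^r)$, $PSL(2,3^r)$, $Sz(2^r)$), and $C$ is still maximal in the $PSL(2,3^r)$ case by \cite[Table 8.1]{5}. Whenever $C$ is maximal, the hypothesis that $C$ normalizes $P$ with $p\notin\pi(C)$ gives the subgroup $PC>C$, hence $PC=G$ by maximality, hence $P\lhd G$, contradicting simplicity. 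This one observation does in two lines what your torus-normalizer comparison would need many pages to do. I would recommend replacing your middle step with this maximality argument; your treatment of the two surviving families can then stand essentially as is.
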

\raggedbottom
\begin{proof}[\textbf{Proof}]
	We observe from \cite[Table 5]{2} that only nonabelian simple groups which admit a
	nontrivial coprime action are simple groups of Lie type defined over some finite field $F$ and $\large \langle \alpha \rangle$ is a group of automorphisms induced by automorphisms of the underlying field. Note that $G\cong \! ^d\!\sum(q^r)$ and $C_G(\alpha)\cong \!^d\!\sum(q)$ for a root system $\sum$ and a prime power $q$ where $r=|\alpha|$  by \cite[Theorem 4.9.1 (a) and (c)]{3}. 
	
	Now set $C=C_G(\alpha)$. We see that $C$ is guaranteed to be a maximal subgroup of $G$ by \cite[Theorem 1]{4} except for some minimal nonsolvable groups $PSL(2,2^r)$, $PSL(2,3^r)$ and $Sz(2^r)$. We  also observe from  \cite[Table 8.1]{5} that $C$ is also a maximal subgroup of $G$ when $G=PSL(2,3^r)$. In the case that $C$ is a maximal subgroup of $G$, the equality  $PC=G$ holds since $p\notin \pi(C)$ and $C$ normalizes $P$ by the hypothesis. It follows that $P\lhd G$, which is not possible as $G$ is simple. This argument shows that $G=PSL(2,2^r) \ or \ Sz(2^r)$.
	
	First suppose that $G=PSL(2,2^r)$ and set $q=2^r$. Then we see that $C=PSL(2,2)\cong S_3$. Note that $p\geq 5$ and $|\alpha|=r\geq 5$ as $p,r\notin \pi(C)$. We see that $C$ is contained in a maximal subgroup $D$, which is a dihedral group of order $2(q+1)$ (see \cite[Table 8.1]{5}). Let $A$ be the subgroup of $C$ of order $3$. Clearly, $A$ is normalized by $D$, and so $D=N_G(A) $ as $G$ is simple and $D$ is a maximal subgroup of $G$. Now we claim that $p \mid q+1$. Since $p\neq 2$, we have that $P$ is cyclic by \cite[Theorem 8.6.9]{11}, and so $Aut(P)$ is abelian. It follows that $C/C_{C}(P)$ is abelian. Since $A=C'$, we get that $A\leq C_{C}(P) $, and so $P\leq C_G(A)\leq N_G(A)=D$. Since $p$ is odd and $|D|=2(q+1)$, we have that $p$ divides $q+1$. Then the claim follows.  Consequently, we observe that if such a Sylow $p$-subgroup of $G$ exist, it must be included in $D=N_G(A)$. On the other hand, $D=N_G(A)$ is $\alpha$-invariant  and $\pi(D)\neq \{2,3\}$ as $r\geq 5$. Pick an $\alpha$-invariant Sylow $p$-subgroup $P$ of $D$ for $p\geq 5$. We see that $P$ is also a Sylow subgroup of $G$ as $|G|=(q-1)q(q+1)$. Clearly, $P$ is normalized by $C$ and $p\notin \pi(C)$, which completes the proof for this case.	 
	
	Now we shall investigate the case where $G=Sz(2^r)$. Note that $C\cong Sz(2)$, which is a Frobenius group of order $20$. Note also that $G$ is of order $q^2(q^2+1)(q-1)$ and $(q-1,q^2+1)=1$.  We see  from \cite[Table 8.16]{5} that $G$ has  four maximal subgroups up to conjugacy of orders $q^2(q-1),2(q-1),(q-\sqrt{2q}+1)4$ and $(q+\sqrt{2q}+1)4$. Since $5$ is a divisor of $q^2+1$, we see that $5$ divides either $q-\sqrt{2q}+1$ or $q+\sqrt{2q}+1$.  Notice that there is no proper subgroup whose order is divisible by both $5$ and a prime dividing $q-1$ due to  the orders of the maximal subgroups. Thus, the proper subgroup $PC$ must be contained in a maximal subgroup $M$ of order $(q\pm\sqrt{2q}+1)4$ where the sign $\pm$ is chosen such that $5$ divides $q\pm \sqrt{2q}+1$. Clearly, $p,r\notin \{2,5\}$ by the hypothesis. We see that $p\neq 3$ as $|G|$ is not divisible by $3$, and so  $p\geq 7$. If $r=3$ then  $|M|=20$, and so $P=1$. Thus, we have $r\geq7$. Let $B$ be a normal subgroup of $C$ of order $5$. Since $M$ is an extension of a cyclic group by $\mathbb Z_4$, we obtain that $M=N_G(B)$, and so $M$ is $\alpha$-invariant. We claim that $\pi(M)\neq \{2,5\}$. If $|M|\equiv 0 \ mod \ 5^2$, then we have $q^2=4^r\equiv -1 \ mod \ 5^2$. It follows that $r\equiv 5 \ mod \ 10$, which forces that $r=5$ as $r$ is a prime. This is not possible as $r\geq 7$, and so $\pi(M)\neq \{2,5\}$ as desired.  Thus, $M$ contains an  $\alpha$-invariant Sylow $p$-subgroup of $G$ for $p \geq 7$, which is normalized by $C$. 
\end{proof}

\begin{proof}[\textbf{Proof of Theorem \ref{main thm}}]
Let $P$ be an $\alpha$-invariant Sylow $p$-subgroup of $G$ where $p\notin \pi(C_G(\alpha))$.
If $P\lhd G$, then clearly $P$ is normalized by $C_G(\alpha)$. Thus, we need to prove the reverse direction, that is, if $C_G(\alpha)$ normalizes $P$, then $P\lhd G$.	Let $G$ be a minimal counterexample to the theorem and 
let $N$ be a proper $\alpha$-invariant normal subgroup of $G$. Suppose that $N\neq 1$. It is routine to see that both $N$ and $G/N$ satisfy the hypothesis (see the first paragraph of the proof of Proposition \ref{main lemma}). Note also that if $G$ is $X$-free for a group $X$, then both $N$ and $G/N$ are clearly $X$-free. Thus, we see that both $N$ and $G/N$ are $p$-closed. It follows that $G$ is $p$-separable, and so $G$ is $p$-closed by Proposition \ref{main lemma}. This contradiction shows that $N=1$ and $G$ is characteristically simple.

Then we see that $G = G_1 \times G_2 \times \dots \times G_n$ where $G_i$ are isomorphic simple groups and
$\langle \alpha \rangle $ acts transitively on $\{ G_i
	\mid i = 1, . . . , n\}.$ Assume $n > 1$. It follows that $n=r$ and $$C_G(\alpha)=\{(g_0,g_1,...,g_{r-1}) \mid   g_j\alpha=g_{j+1} \ \emph{for all} \ j\in \mathbb Z_r  \}.$$
	
	Hence, we get that $C_G(\alpha)\cong G_i$ for $i=1, ...,n$, which yields that $\pi(C_G(\alpha))=\pi(G)$. Then a Sylow $p$-subgroup of $G$ is trivial, which is a contradiction. Consequently, we see that $n=1$, that is, $G$ is a simple group. It follows that either $G=PSL(2,r)$ and $p\mid 2^r+1$, or $G=Sz(2^r)$ and $p\mid 4^r+1$ by Lemma \ref{clasification}, which are both impossible by our hypothesis.
\end{proof}

\begin{remark}
	Theorem B can be stated in a more general way by using the constraints in Lemma \ref{clasification} about the primes $p$ and $r$. For example, if $p=3$ or $r=3$ there is no need to assume that $G$ is $PSL(2,2^r)$ or $Sz(2^r)$ free.
\end{remark}

\begin{proof}[\textbf{Proof of Corollary \ref{corol.}}]
	First assume that $C_G(\alpha)$ centralizes $P$ and set $N=N_G(P)$.
	Since $C_N(\alpha)\subseteq C_G(\alpha)$ centralizes $P$, we obtain that $G$ is $p$-nilpotent by Theorem A, and in particular, $G$ is $p$-separable. It follows that $G$ is $p$-closed by Proposition \ref{main lemma} as $C_G(\alpha)$ also normalizes $P$. Thus, $G\cong P\times H$ where $H$ is a Hall $p'$-subgroup of $G$.
	
	Now suppose that $G\cong P\times H$ for a group $H$. Then $G$ has a normal Hall $p'$-subgroup $N$. We see that $C_G(\alpha)\subseteq N$ as $p\notin \pi(C_G(\alpha))$. Thus, we get that $C_G(\alpha)$ centralizes $P$ as $P \unlhd G$.
\end{proof}

\section{A Question about the Fitting height of a group}
Let $G$ be a solvable group and $A$ be a group acting on $G$ by automorphisms such that $(|A|,|G|)=1$. Let $F(G)$ denote the Fitting subgroup of $G$ and set $F_0(G)=1$, $F_1(G)=F(G)$ and define inductively $F_{i+1}(G)$ as the full inverse image of $F(G/F_i(G))$ in $G$. The smallest natural number $n$ such that $F_n(G)=G$ is called \textbf{the Fitting height} of $G$. 

Let $l(A)$ denote the number of not necessarily distinct primes
whose product is $|A|$. If  $l(A)=1$ and $P$ is an $A$-invariant Sylow $p$-subgroup of $G$ such that $p\notin C_G(A)$, then $P\leq F(G)$ by Theorem \ref{main thm}. Thus, it is natural to ask the following question:

\begin{questionn}\label{question}
Let $G$ be a solvable group and $A$ be a group acting on $G$ by automorphisms such that $(|A|,|G|)=1$. Let $P$ an $A$-invariant Sylow $p$-subgroup of $G$ such that $p\notin \pi (C_G(A))$. Assume that  $C_G(A)$ normalizes $P$. Is there a natural number $n$ which only depends on $l(A)$ such that $P\leq F_{n}(G)$?
\end{questionn}
Assume the hypothesis and the notations of Question \ref{question}:
\begin{conjecturen}\label{con1}
   $P$ is contained in $ F_n(G)$ where $n=l(A)$.
\end{conjecturen}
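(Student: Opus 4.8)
The plan is to induct on $l(A)$, with Theorem \ref{main thm} supplying the base case and the self-similarity of the Fitting series, $F_{k}(G)/F(G)=F_{k-1}(G/F(G))$, reducing the inductive step to lowering $l(A)$ by one. Throughout I would argue with a minimal counterexample and use that the hypotheses of the conjecture pass to every $A$-invariant section (as in the first paragraph of the proof of Proposition \ref{main lemma}, together with $C_{\overline G}(A)=\overline{C_G(A)}$), so that induction on $|G|$ is freely available for proper sections.

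For $l(A)=1$ the group $A$ is cyclic of prime order $r$. Since $G$ is solvable it is automatically $PSL(2,2^r)$- and $Sz(2^r)$-free, and we may assume $p\in\pi(G)$ (otherwise $P=1$ and there is nothing to prove). Hence Theorem \ref{main thm} applies, and the hypothesis that $C_G(A)$ normalizes $P$ gives $P\lhd G$, so $P=O_p(G)\le F(G)=F_1(G)$, which is the base case.

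Now suppose $l(A)=n\ge 2$; after reducing to $A$ solvable, choose a normal subgroup $B\lhd A$ of prime index $r$, so that $l(B)=n-1$ and $A/B$ is cyclic of order $r$. By the identity $F_n(G)/F(G)=F_{n-1}(\overline G)$ for $\overline G=G/F(G)$, the goal becomes $\overline P\le F_{n-1}(\overline G)$, where $\overline P$ is an $A$-invariant Sylow $p$-subgroup of $\overline G$ normalized by $C_{\overline G}(A)$ with $p\notin\pi(C_{\overline G}(A))$. The difficulty is that $\overline G$ still carries all of $A$, so the length has not dropped. To bring $B$ into play I would examine the $A$-invariant subgroup $C_G(B)$: the group $A/B$ acts coprimely on it, $C_P(B)=P\cap C_G(B)$ is an $A/B$-invariant Sylow $p$-subgroup of $C_G(B)$, we have $C_{C_G(B)}(A/B)=C_G(A)$, and since $C_G(A)\le C_G(B)$ normalizes $P$ it also normalizes $C_P(B)$. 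Thus the base case applied to the action of $A/B$ on $C_G(B)$ yields $C_P(B)\lhd C_G(B)$, i.e. $C_P(B)\le F(C_G(B))$. The plan is then to transport this fixed-point information across the quotient by $F(G)$, using a Turull-type comparison of the Fitting series of $G$ and of $C_G(B)$, to conclude that $C_{\overline G}(B)$ already normalizes $\overline P$. Granting this, the pair $(\overline G,B)$ satisfies the hypotheses of the conjecture with $l(B)=n-1$, and induction finishes the proof.

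The main obstacle is precisely this transport step: showing that peeling off the bottom Fitting layer $F(G)$ consumes exactly one prime factor of $|A|$, so that the normalizer condition survives into $\overline G$ for the smaller group $B$. Under the classical fixed-point-free hypothesis $C_G(A)=1$ this layer-counting is the content of Turull's machinery; here $C_G(A)$ may be a nontrivial $q$-group, and controlling how it meets $P$ through successive Fitting layers is exactly what the weaker hypothesis forces us to confront. I expect the crux to be a strengthening of Lemma \ref{repthm} that, instead of merely producing a nonzero fixed space on one faithful module, tracks the compatibility of the normalizer condition with the $A$-module structure of $F(G)/\Phi(F(G))$ and of the higher layers; supplying this module-theoretic input is, in my view, the genuinely new ingredient the argument needs, and its absence is why the statement is presently only a conjecture.
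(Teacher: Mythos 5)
The statement you are trying to prove is Conjecture~\ref{con1}, which the paper explicitly leaves open: there is no proof of it anywhere in the text, only the observation (just before Question~\ref{question}) that the case $l(A)=1$ follows from Theorem~\ref{main thm}. So there is nothing in the paper to compare your argument against, and your attempt must stand on its own. It does not: you yourself flag that the ``transport step'' --- passing from $C_P(B)\le F(C_G(B))$ to the assertion that $C_{\overline G}(B)$ normalizes $\overline P$ in $\overline G=G/F(G)$ --- is unproven, and that step is the entire content of the inductive argument. Your base case is fine (a solvable group has no nonabelian simple sections, hence is $PSL(2,2^r)$- and $Sz(2^r)$-free, so Theorem~\ref{main thm} applies; alternatively Proposition~\ref{main lemma} suffices since solvable groups are $p$-separable), and the setup $C_{C_G(B)}(A/B)=C_G(A)$, $C_P(B)\in Syl_p(C_G(B))$ is correct. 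But the missing step is not a technical verification; it is the conjecture.

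Two further concrete problems, beyond the one you acknowledge. First, even granting the transport step, the pair $(\overline G,B)$ need not satisfy the hypotheses of the conjecture: those hypotheses require $p\notin\pi(C_{\overline G}(B))$, and this is a strictly stronger condition than $p\notin\pi(C_G(A))$, since $C_G(B)\supseteq C_G(A)$ can very well contain $p$-elements --- indeed your own intermediate conclusion $C_P(B)\lhd C_G(B)$ is perfectly consistent with $C_P(B)\neq 1$. To run the induction you would additionally have to show $C_P(B)\le F(G)$, i.e.\ $C_{\overline P}(B)=1$, which is yet another Turull-type input you have not supplied. Second, the ``reduction to $A$ solvable'' is asserted without justification; if $|G|$ is odd, $A$ can be a nonabelian simple group coprime to $|G|$, in which case $A$ has no normal subgroup $B$ of prime index and your induction scheme has no first step. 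In short, the proposal is a reasonable research plan in the spirit of Turull's work \cite{10}, but it is not a proof, and the statement remains a conjecture.
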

\begin{conjecturen}\label{con2}
 Assume further that $C_G(A)$ centralizes $P$.  Then  $P$ is contained in $ F_n(G)$ where $n=l(A)$.
\end{conjecturen}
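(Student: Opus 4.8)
The plan is to establish Conjecture \ref{con2} by induction on $l(A)$, with Corollary \ref{corol.} serving as the base case; the same scheme, fed by Theorem \ref{main thm} instead of Corollary \ref{corol.}, would address the stronger Conjecture \ref{con1}. It is convenient to first record a reformulation. Writing $\bar G=G/F_{n-1}(G)$ and letting $\bar P$ denote the image of $P$, the conclusion $P\le F_n(G)$ is equivalent to $\bar P\le F(\bar G)$, that is, to $\bar G$ being $p$-closed: indeed $\bar P$ is a Sylow $p$-subgroup of $\bar G$, the $p$-part of $F(\bar G)$ is $O_p(\bar G)$, and a $p$-group lying in $F(\bar G)$ must lie in $O_p(\bar G)$. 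Thus the whole statement asserts that the fixed-point data of $A$ forces the quotient of $G$ by its $(n-1)$-st Fitting subgroup to have a normal Sylow $p$-subgroup, one Fitting level being consumed by each prime factor of $|A|$.

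For the base case $l(A)=1$ the group $A=\langle\alpha\rangle$ has prime order, and since $G$ is solvable it is automatically $PSL(2,2^r)$- and $Sz(2^r)$-free, so the exceptional configurations of Lemma \ref{clasification} cannot occur. Under the centralizing hypothesis Corollary \ref{corol.} gives $G\cong P\times H$, whence $P=O_p(G)\le F_1(G)$; under the weaker normalizing hypothesis Theorem \ref{main thm} gives that $G$ is $p$-closed and again $P\le F_1(G)$. This is exactly the assertion for $n=l(A)=1$.

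For the inductive step I would, after first reducing to solvable $A$, fix a chief series $1=A_0\lhd A_1\lhd\cdots\lhd A_n=A$ with $|A_i/A_{i-1}|$ prime and set $C_i=C_G(A_i)$, producing the descending chain $C_G(A)=C_n\le\cdots\le C_0=G$. Each section $A_i/A_{i-1}$ has prime order and acts coprimely on the $A$-invariant subgroup $C_{i-1}=C_G(A_{i-1})$, with fixed-point subgroup $C_i$; moreover $C_P(A_{i-1})$ is an $A$-invariant Sylow $p$-subgroup of $C_{i-1}$ by the standard properties of coprime action. The unit step is to feed the prime-order action of $A_i/A_{i-1}$ on $C_{i-1}$ into Corollary \ref{corol.} (respectively Theorem \ref{main thm}) in order to locate the Sylow $p$-subgroup of $C_{i-1}$ inside its own Fitting series, and then to transport this information one rung up the chain, from $C_i$ to $C_{i-1}$, gaining exactly one Fitting level at each rung, so that after $n$ rungs $P$ is placed inside $F_n(G)$. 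Along the way I expect to reuse the minimal-counterexample reductions of Proposition \ref{main lemma} and Theorem \ref{Second main theorem}, reducing to $O_p(G)=1$ and to $F(G)=O_{p'}(G)$ being self-centralizing via the Hall--Higman theorem, together with the fixed-point input of Lemma \ref{repthm} to control the action of $A$ on the chief factors of $F(G)$.

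The main obstacle---and the reason this is stated as a conjecture---is that the hypothesis is supplied only at the \emph{bottom} of the fixed-point chain, namely that the full centralizer $C_G(A)=C_n$ centralizes $P$, whereas the inductive unit step needs the analogous property at each intermediate stage $C_i$, that $C_i$ normalize (or centralize) the relevant Sylow $p$-subgroup of $C_{i-1}$. Propagating the condition \emph{up} the chain $C_n\le\cdots\le C_0$ is not formal; it amounts to a precise compatibility between the Fitting series of the sections $C_i$ and the coprime action, of the Turull type, quantifying that a single prime can raise the Fitting level by at most one. Secondary difficulties are the treatment of non-solvable $A$, whose chief factors need not have prime order so that the base results no longer apply verbatim and the nonabelian composition factors must be handled separately, and the sharpness of the bound $n=l(A)$, which the representation-theoretic step in the analysis of $F(G)$ must preserve without loss.
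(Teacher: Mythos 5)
This statement is posed in the paper as Conjecture~\ref{con2}; the paper offers no proof of it, so there is nothing to compare your argument against except the paper's own remark (preceding Question~\ref{question}) that the case $l(A)=1$ follows from Theorem~\ref{main thm}. Your base case is correct and coincides with that remark: for $|A|$ prime and $G$ solvable, $G$ is automatically $PSL(2,2^r)$- and $Sz(2^r)$-free, so Theorem~\ref{main thm} (or, under the centralizing hypothesis, Corollary~\ref{corol.}) gives $P\le O_p(G)\le F_1(G)$. Your reformulation of the conclusion as ``$G/F_{n-1}(G)$ is $p$-closed'' is also sound.

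The inductive step, however, is a plan rather than a proof, and the obstacle you flag is fatal as the argument stands. To feed the prime-order action of $A_i/A_{i-1}$ on $C_{i-1}=C_G(A_{i-1})$ into Theorem~\ref{Second main theorem}, Theorem~\ref{main thm} or Corollary~\ref{corol.}, you need $p\notin\pi\bigl(C_{C_{i-1}}(A_i)\bigr)=\pi(C_G(A_i))$; but the hypothesis only supplies $p\notin\pi(C_G(A))=\pi(C_n)$, and for $i<n$ the subgroup $C_i=C_G(A_i)$ contains $C_P(A_i)$, which by coprimality is a full Sylow $p$-subgroup of $C_i$ and is in general nontrivial. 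So the ``unit step'' cannot even be invoked at the intermediate rungs, quite apart from the question of whether its conclusion could be propagated up the chain; and the passage from ``the Sylow $p$-subgroup of $C_{i-1}$ lies in $F(C_{i-1})$'' to ``$P$ lies one Fitting level higher in $G$'' is itself a Turull-type theorem, i.e.\ precisely the content one is being asked to prove. Two further gaps remain unaddressed: there is no reduction of nonsolvable $A$ (which can occur, since $(|A|,|G|)=1$ does not force $A$ solvable) to the solvable case where a chief series with prime-order factors exists, and no argument that the bound $n=l(A)$ is preserved at each rung. In short, you have correctly located where the difficulty lies, but the proposal does not close it, and the statement remains, as in the paper, a conjecture.
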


Clearly, Conjecture $\ref{con1}$ is stronger than Conjecture $\ref{con2}$. On the other hand, if Conjecture $\ref{con2}$ is true than a well known conjecture about the fitting height can be verified as a corollary, which states that  the Fitting height of $G$ is bounded by $l(A)$ if $C_G(A)=1$. For results related to this conjecture, see \cite{10}. Some similar conjectures could be made about the $p$-length of $p$-separable groups.

 \section*{Acknowledgements}
I would like to thank Prof. Danila Revin for his help in the proof of Lemma \ref{clasification}.

	\end{document}